\documentclass[10pt,oneside]{amsart}

\usepackage[
paper=a4paper,
headsep=15pt,text={135mm,216mm},centering
]{geometry}

\usepackage[bookmarks]{hyperref}
\usepackage{amssymb,amsxtra}
\usepackage{graphicx,xcolor,mathrsfs}
\usepackage{float,array,calc,booktabs,stackrel,url}
\usepackage{pb-diagram} 
\usepackage{enumitem}\setlist[enumerate,1]{font=\upshape}\setlist[enumerate,2]{font=\upshape}
\usepackage{mathtools}
\usepackage{tikz}
\usetikzlibrary{
  cd,
  calc,
  positioning,
  arrows,
  decorations.pathreplacing,
  decorations.markings,
}
\usepackage[mode=buildnew]{standalone}

\definecolor{todo-background-color}{gray}{0.95}
\usepackage[
  textwidth=1.1in,
  backgroundcolor=todo-background-color,
  bordercolor=black,
  linecolor=black,
  textsize=footnotesize
]{todonotes}

\iftrue
    
    \makeatletter
    \def\@settitle{%
      \vspace*{-10pt}
      \begin{flushleft}%
        \LARGE\bfseries
        \strut\@title\strut
      \end{flushleft}%
    }
    \def\@setauthors{%
      \begingroup
      \def\thanks{\protect\thanks@warning}%
      \trivlist
      \raggedright
      \large \@topsep27\p@\relax
      \advance\@topsep by -\baselineskip
    \item\relax
      \author@andify\authors
      \def\\{\protect\linebreak}%
      \authors
      \ifx\@empty\contribs
      \else
      ,\penalty-3 \space \@setcontribs
      \@closetoccontribs
      \fi
      \normalfont
      \endtrivlist
      \endgroup
    }
    \def\@setaddresses{\par
      \nobreak \begingroup
      \small\raggedright
      \def\author##1{\nobreak\addvspace\smallskipamount}%
      \def\\{\unskip, \ignorespaces}%
      \interlinepenalty\@M
      \def\address##1##2{\begingroup
        \par\addvspace\bigskipamount\noindent
        \@ifnotempty{##1}{(\ignorespaces##1\unskip) }%
        {\ignorespaces##2}\par\endgroup}%
      \def\curraddr##1##2{\begingroup
        \@ifnotempty{##2}{\nobreak\noindent\curraddrname
          \@ifnotempty{##1}{, \ignorespaces##1\unskip}\/:\space
          ##2\par}\endgroup}%
      \def\email##1##2{\begingroup
        \@ifnotempty{##2}{\nobreak\noindent E-mail address%
          \@ifnotempty{##1}{, \ignorespaces##1\unskip}\/:\space
          \ttfamily##2\par}\endgroup}%
      \def\urladdr##1##2{\begingroup
        \def~{\char`\~}%
        \@ifnotempty{##2}{\nobreak\noindent\urladdrname
          \@ifnotempty{##1}{, \ignorespaces##1\unskip}\/:\space
          \ttfamily##2\par}\endgroup}%
      \addresses
      \endgroup
      \global\let\addresses=\@empty
    }
    \def\@setabstracta{%
      \ifvoid\abstractbox
      \else
      \skip@17pt \advance\skip@-\lastskip
      \advance\skip@-\baselineskip \vskip\skip@
      \box\abstractbox
      \prevdepth\z@ 
      \vskip-28pt
      \fi
    }
    \renewenvironment{abstract}{%
      \ifx\maketitle\relax
      \ClassWarning{\@classname}{Abstract should precede
        \protect\maketitle\space in AMS document classes; reported}%
      \fi
      \global\setbox\abstractbox=\vtop \bgroup
      \normalfont\small
      \list{}{\labelwidth\z@
        \leftmargin0pc \rightmargin\leftmargin
        \listparindent\normalparindent \itemindent\z@
        \parsep\z@ \@plus\p@
        
      }%
    \item[\hskip\labelsep\bfseries\abstractname.]%
    }{%
      \endlist\egroup
      \ifx\@setabstract\relax \@setabstracta \fi
    }

    \def\ps@headings{\ps@empty
      \def\@evenhead{%
        \setTrue{runhead}%
        \normalfont\scriptsize
        \rlap{\thepage}\hfill
        \def\thanks{\protect\thanks@warning}%
        \leftmark{}{}}%
      \def\@oddhead{%
        \setTrue{runhead}%
        \normalfont\scriptsize
        \def\thanks{\protect\thanks@warning}%
        \rightmark{}{}\hfill \llap{\thepage}}%
      \let\@mkboth\markboth
    }\ps@headings

    \def\section{\@startsection{section}{1}%
      \z@{-1.4\linespacing\@plus-.5\linespacing}{.8\linespacing}%
      {\normalfont\bfseries\Large}}
    \def\subsection{\@startsection{subsection}{2}%
      \z@{-.8\linespacing\@plus-.3\linespacing}{.5\linespacing\@plus.2\linespacing}%
      {\normalfont\bfseries\large}}
    \def\subsubsection{\@startsection{subsubsection}{3}%
      \z@{.7\linespacing\@plus.2\linespacing}{-1.5ex}%
      {\normalfont\itshape}}
    \def\paragraph{\@startsection{paragraph}{4}%
      \z@{.7\linespacing\@plus.2\linespacing}{-1.5ex}%
      {\normalfont\itshape}}
    \def\@secnumfont{\bfseries}

    \renewcommand\contentsnamefont{\bfseries}
    \def\@starttoc#1#2{\begingroup
      \setTrue{#1}%
      \par\removelastskip\vskip\z@skip
      \@startsection{}\@M\z@{\linespacing\@plus\linespacing}%
      {.5\linespacing}{
        \contentsnamefont}{#2}%
      \ifx\contentsname#2%
      \else \addcontentsline{toc}{section}{#2}\fi
      \makeatletter
      \@input{\jobname.#1}%
      \if@filesw
      \@xp\newwrite\csname tf@#1\endcsname
      \immediate\@xp\openout\csname tf@#1\endcsname \jobname.#1\relax
      \fi
      \global\@nobreakfalse \endgroup
      \addvspace{32\p@\@plus14\p@}%
      \let\tableofcontents\rela\x
    }
    \def\contentsname{Contents}
    \def\l@section{\@tocline{2}{.5ex}{0mm}{5pc}{}}
    \def\l@subsection{\@tocline{2}{0pt}{2em}{5pc}{}}
    \makeatother

\fi

\def\to{\mathchoice{\longrightarrow}{\rightarrow}{\rightarrow}{\rightarrow}}
\makeatletter
\newcommand{\shortxra}[2][]{\ext@arrow 0359\rightarrowfill@{#1}{#2}}
\def\longrightarrowfill@{\arrowfill@\relbar\relbar\longrightarrow}
\newcommand{\longxra}[2][]{\ext@arrow 0359\longrightarrowfill@{#1}{#2}}
\renewcommand{\xrightarrow}[2][]{\mathchoice{\longxra[#1]{#2}}%
  {\shortxra[#1]{#2}}{\shortxra[#1]{#2}}{\shortxra[#1]{#2}}}
\makeatother

\makeatletter
\def\addtagsub#1{\let\oldtf=\tagform@\def\tagform@##1{\oldtf{##1}\hbox{$_{#1}$}}}
\makeatother

\makeatletter
\def\Nopagebreak{\@nobreaktrue\nopagebreak}
\makeatother

\makeatletter
\newtheoremstyle{theorem-giventitle}
        {}{}              
        {\itshape}                      
        {}                              
        {\bfseries}                     
        {.}                             
        {\thm@headsep}                             
        {\thmnote{\bfseries#3}}
\newtheoremstyle{theorem-givenlabel}
        {}{}              
        {\itshape}                      
        {}                              
        {\bfseries}                     
        {.}                             
        {\thm@headsep}                             
        {\thmname{#1}~\thmnumber{#3}\setcurrentlabel{#3}}
\newtheoremstyle{definition-giventitle}
        {}{}              
        {}                      
        {}                              
        {\bfseries}                     
        {.}                             
        {\thm@headsep}                             
        {\thmnote{\bfseries#3}}
\def\setcurrentlabel#1{\gdef\@currentlabel{#1}}
\makeatother

\newtheorem{theorem}{Theorem}[section]

\newtheorem{proposition}[theorem]{Proposition}

\newtheorem{lemma}[theorem]{Lemma}

\theoremstyle{definition}
\newtheorem{definition}[theorem]{Definition}

\newtheorem{remark}[theorem]{Remark}

\newtheorem*{case2'}{Case 2$'$}

\theoremstyle{theorem-giventitle}
\newtheorem{theorem-named}{}
\theoremstyle{theorem-givenlabel}
\newtheorem{theorem-labeled}{Theorem}

\theoremstyle{definition-giventitle}
\newtheorem{definition-named}{}
\newtheorem{conjecture-named}{}
\newtheorem{case-named}{}

\numberwithin{equation}{section}

\def\d{\partial}
\def\Z{\mathbb{Z}}

\def\Q{\mathbb{Q}}
\def\C{\mathbb{C}}

\def\cP{\mathcal{P}}

\def\cT{\mathcal{T}}
\def\scT{\mathscr{T}}
\def\scS{\mathscr{S}}

\def\hat{\widehat}
\def\sm{\smallsetminus}
\DeclareMathOperator\Ker{Ker}

\def\Im{\operatorname{Im}}

\DeclareMathOperator\sign{sign}

\DeclareMathOperator\Spin{Spin}
\def\Spinc{\Spin^c}

\def\osigmat{\bar\sigma^{(2)}}
\def\cC{\mathcal{C}}

\makeatletter



\begin{document}

\title[One-bipolar topologically slice knots and primary decomposition]{One-bipolar topologically slice knots and primary decomposition}

\author{Min Hoon Kim}
\address{
  Department of Mathematics\\
  POSTECH \\
  Pohang Gyeongbuk 37673\\
  Republic of Korea
}
\email{kminhoon@gmail.com}

\author{Se-Goo Kim}
\address{Department of Mathematics and Research Institute for Basic Sciences\\
Kyung Hee University\\
Seoul 02447\\
Republic of Korea
}
\email{sgkim@khu.ac.kr}

\author{Taehee Kim}
\address{Department of Mathematics\\
  Konkuk University\\
  Seoul 05029\\
  Republic of Korea
}
\email{tkim@konkuk.ac.kr}

\thanks{The first named author was partly supported by the POSCO TJ Park
Science Fellowship. The second named author was supported by the Basic Science Research Program through the National Research Foundation of Korea (NRF) funded by the Ministry of Education (NRF-2015R1D1A1A01058384). The last named author was supported by Basic Science Research Program through the National Research Foundation of Korea (NRF) funded by the Ministry of Education (no.2018R1D1A1B07048361).}

\def\subjclassname{\textup{2010} Mathematics Subject Classification}
\expandafter\let\csname subjclassname@1991\endcsname=\subjclassname
\expandafter\let\csname subjclassname@2000\endcsname=\subjclassname
\subjclass{%
  57N13, 
  57M27, 
  57N70, 
  57M25
}

\begin{abstract}
Let $\{\cT_n\}$ be the bipolar filtration of the smooth concordance group of topologically slice knots, which was introduced by Cochran, Harvey, and Horn. It is known that for each $n\ne 1$ the group $\cT_n/\cT_{n+1}$ has infinite rank and $\cT_1/\cT_2$ has positive rank. In this paper, we show that $\cT_1/\cT_2$ also has infinite rank. Moreover, we prove that there exist infinitely many Alexander polynomials $p(t)$ such that there exist infinitely many knots in $\cT_1$ with Alexander polynomial $p(t)$ whose nontrivial linear combinations are not concordant to any knot with Alexander polynomial coprime to $p(t)$, even modulo $\cT_2$. This extends the recent result of Cha on the primary decomposition of $\cT_n/\cT_{n+1}$ for all $n\ge 2$ to the case $n=1$. 

To prove our theorem, we show that the surgery manifolds of satellite links of $\nu^+$-equivalent knots with the same pattern link have the same Ozsv\'{a}th-Szab\'{o} $d$-invariants, which is of independent interest. As another application, for each $g\ge 1$, we give a topologically slice knot of concordance genus $g$ that is $\nu^+$-equivalent to the unknot.
\end{abstract}

\maketitle

\section{Introduction}
Since Freedman \cite{Freedman:1982-1, Freedman:1984-1} and Donaldson \cite{Donaldson:1983-1} revealed the difference between the smooth and topological structures on 4-manifolds, classifying the structure of the concordance group of topologically slice knots, which we denote by $\cT$, has been one of the central research subjects. Freedman showed that knots with trivial Alexander polynomial are topologically slice \cite{Freedman:1982-2, Freedman-Quinn:1990-1}, so $\cT$ has a subgroup $\Delta$ generated by knots with trivial Alexander polynomial. By using gauge theory and Heegaard Floer homology, it is known that $\cT$ and $\Delta$ have $\Z^\infty$-summands \cite{Endo:1995-1, Hedden-Kirk:2011-1, Hedden-Kirk:2010-1,Ozsvath-Stipsicz-Szabo:2014-1,DHST:2019-1,Kim-Park:2016-1}. The difference between $\cT$ and $\Delta$ was found recently, and it is known that the group $\cT/\Delta$ has a $\Z^\infty\oplus \Z_2^\infty$ subgroup \cite{Hedden-Livingston-Ruberman:2010-01,Hedden-Kim-Livingston:2016-1,Cochran-Horn:2012-1}. 

Cochran, Harvey, and Horn \cite{Cochran-Harvey-Horn:2012-1} established a filtration indexed by nonnegative integers
\[
0\subset \cdots \subset \cT_{n+1}\subset \cT_n \subset \cdots \subset \cT_0 \subset\cT
\]
which is called the \emph{bipolar filtration} of $\cT$, where $\cT_n$ is the subgroup of (the concordance classes of) \emph{$n$-bipolar} knots.  We give a definition of $\cT_n$ in Definition~\ref{definition:n-bipolar}. Briefly, the notion of an $n$-bipolar knot is based on the idea of relating Donaldson's diagonalization theorem to the covering spaces of a 4-manifold with boundary the zero-framed surgery on the knot in $S^3$ corresponding to the derived series of the fundamental group of the 4-manifold. The bipolar filtration of $\cT$ also descends to a filtration of $\scT:=\cT/\Delta$:
\[
0\subset \cdots \subset \scT_{n+1}\subset \scT_n \subset \cdots\subset \scT_1 \subset \scT_0\subset \scT
\] 
where $\scT_{n}:=\cT_n/(\cT_n\cap \Delta)$.

Many concordance invariants derived from the Floer homology theory by Ozsv\'{a}th and Szab\'{o} vanish on $\cT_0$ or $\cT_1$. In \cite{Cochran-Harvey-Horn:2012-1}, it was shown that the $\tau$-invariant \cite{Ozsvath-Szabo:2003-1},  $\varepsilon$-invariant \cite{Hom:2011-1}, and $\delta_p$-invariant \cite{Manolescu-Owens:2007-1,Jabuka:2012-1} vanish for knots in $\cT_0$. It is known that the $\nu^+$-invariant \cite{Hom-Wu:2016-1}, the $\Upsilon$-invariant \cite{Ozsvath-Stipsicz-Szabo:2014-1}, and the $\varphi_j$-invariants \cite{DHST:2019-1} vanish for knots in $\cT_0$, and also the various invariants in \cite{Jabuka-Naik:2007-1,Grigsby-Ruberman-Strle:2008-1,Greene-Jabuka:2011-1} derived from the correction term $d$-invariants of Ozsv\'{a}th and Szab\'{o} \cite{Ozsvath-Szabo:2003-2} vanish for knots in $\cT_1$ (see \cite[Section~6]{Cochran-Harvey-Horn:2012-1}).

Nevertheless, combining the $d$-invariant with topological concordance invariants such as the von Neumann $\rho$-invariant and the Casson-Gordon invariant, it was found that the bipolar filtrations are highly nontrivial; it was shown that $\cT/\cT_0$ has infinite rank by Cochran, Harvey, and Horn \cite{Cochran-Harvey-Horn:2012-1}. It was also shown that $\cT_n/\cT_{n+1}$ has infinite rank for $n=0$ by Cochran and Horn \cite{Cochran-Horn:2012-1} and for $n\ge 2$ by Cha and the first named author \cite{Cha-Kim:2017-1}. 
For the filtration $\{\scT_n\}$ of $\scT$, Cochran and Horn \cite{Cochran-Horn:2012-1} showed that $\scT_0/\scT_1$ has infinite rank, and Cha \cite{Cha:2019-1} showed that $\scT_n/\scT_{n+1}$ has infinite rank for each $n\ge 2$.

But for the remaining case $n=1$, it is only known that $\cT_1/\cT_2$ has positive rank \cite{Cochran-Harvey-Horn:2012-1} and it remains as an open question whether or not $\cT_1/\cT_2$ and $\scT_1/\scT_2$ have infinite rank. The first main result of this paper is to answer the question:

\begin{theorem}\label{theorem:T_1/T_2}
	The groups $\cT_1/\cT_2$ and $\scT_1/\scT_2$ have infinite rank.
\end{theorem}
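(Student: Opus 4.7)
The strategy is to push the infection-based construction of Cha--Kim \cite{Cha-Kim:2017-1} for $\cT_n/\cT_{n+1}$, $n\ge 2$, down to the hardest case $n=1$, where the obstacle has always been that standard Ozsv\'{a}th--Szab\'{o} $d$-invariant tools vanish on $\cT_1$. The new input is the lemma on $d$-invariants of satellite surgery manifolds of $\nu^+$-equivalent knots (advertised in the abstract), which supplies the missing Floer-theoretic rigidity at the first level. I would construct an infinite family $\{K_i\}_{i\ge 1}$ of topologically slice $1$-bipolar knots, and prove that nontrivial integer linear combinations $\sum n_i K_i$ are not $2$-bipolar. Since each $K_i\in\cT$, linear independence in $\cT_1/\cT_2$ automatically descends to linear independence in $\scT_1/\scT_2$, and a single argument handles both halves of the theorem.

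For the construction, fix a topologically slice seed $J$ that is $\nu^+$-equivalent to the unknot --- for example, a positive-clasped Whitehead double of a knot with vanishing $\tau$-invariant. Fix also a family of infection curves $\eta_i$ in $S^3\sm J$ that are null-homotopic in the complement of some topological slice disk for $J$ (so that satellites along $\eta_i$ remain topologically slice, by Freedman) but that survive in the appropriate stage of the derived series of $\pi_1(S^3\sm J)$ so that first-order $L^2$-signatures can detect them. Choose auxiliary infecting knots $R_i$ whose integrated Levine--Tristram signatures are $\Z$-linearly independent, and let $K_i$ be the result of infecting $J$ by $R_i$ along $\eta_i$. A routine construction, built on top of a $4$-manifold realizing the $\nu^+$-equivalence of $J$ with the unknot, places each $K_i$ in $\cT_1$.

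The core step, and the main obstacle, is the obstruction. Suppose $K=\sum n_i K_i\in\cT_2$, witnessed by a $2$-bipolar bounding $4$-manifold $W$. A Cha-type $L^2$-signature argument along a first-stage axis reduces nontriviality of $K$ in $\cT_1/\cT_2$ to nonvanishing of a Cheeger--Gromov--Cha $\rho$-invariant modulo contributions coming from a metabolizer of the linking form on a prime-power cyclic branched cover of (a satellite of) $K$. At levels $n\ge 2$ the Cha--Kim framework uses $d$-invariant obstructions to pin down that metabolizer, but at level $n=1$ the direct $d$-invariant of the zero-surgery is already zero, leaving the argument stranded. The $\nu^+$-equivalence lemma is exactly what rescues it: because each $K_i$ is a satellite of the $\nu^+$-trivial knot $J$ with the same pattern, the $d$-invariants of the relevant surgery manifolds coincide with those of the satellite of the unknot, which are computable and supply the metabolizer rigidity needed. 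Combined with the $\Z$-linear independence of the signature integrals of the $R_i$, this forces all $n_i=0$, completing the proof.
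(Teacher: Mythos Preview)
Your plan has the right architecture, but the roles in your construction are inverted in a way that breaks the key step. When you infect $J$ by $R_i$ along $\eta_i$, the resulting $K_i$ is a satellite with \emph{companion} $R_i$ and pattern $(J\subset S^3\sm\nu(\eta_i))$, not a satellite of~$J$. Accordingly, the Akbulut--Kirby surgery link for $\Sigma_r(K_i)$ is obtained from that of $\Sigma_r(J)$ by $r$ satellite operations with companion~$R_i$, so Theorem~\ref{theorem:d-invariant-satellite-intro} needs the $R_i$ --- not $J$ --- to be $\nu^+$-equivalent to the unknot. The paper's resolution is exactly to make the \emph{infecting} knots carry both burdens simultaneously: its $J_i=D_-(k_iT_{2,3},2k_i)$ are $\nu^+$-trivial (genus one with $\tau=0$) \emph{and} have $\Z$-independent signature integrals (Lemma~\ref{lemma:J_i}). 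Your separation of these into a $\nu^+$-trivial seed and signature-carrying $R_i$ cannot feed the lemma. The $1$-bipolarity claim is also not ``routine'' from $\nu^+$-triviality of a seed; the paper obtains it from the two-band structure of the genus-one knot $R_m$, tying a $0$-negative knot into one band and a $0$-positive knot into the other.

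There is a second gap in the passage to $\scT_1/\scT_2$: linear independence in $\cT_1/\cT_2$ does \emph{not} descend automatically, since the surjection $\cT_1/\cT_2\to\scT_1/\scT_2$ kills the image of $\cT_1\cap\Delta$. In fact your example seed $J$ is a Whitehead double, so $\Delta_J=1$; since each $\eta_i$ has linking number zero with $J$, every $K_i$ also has trivial Alexander polynomial and hence lies in $\Delta$, vanishing in~$\scT$. The paper sidesteps this by taking the seed $R_m$ with $\Delta_{R_m}=\lambda_m\ne 1$ and proving the stronger primary-decomposition statement (Theorem~\ref{theorem:main}) that the $K_i$ stay independent modulo $\cT_1^{\lambda_m}+\cT_2$. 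Since $\cT_1\cap\Delta\subset\cT_1^{\lambda_m}$, both $\cT_1/\cT_2$ and $\scT_1/\scT_2$ surject onto the common infinite-rank quotient $\cT_1/(\cT_1^{\lambda_m}+\cT_2)\cong\scT_1/(\scT_1^{\lambda_m}+\scT_2)$, which is how Theorem~\ref{theorem:T_1/T_2} is actually deduced.
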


Very recently, Cha \cite{Cha:2019-1} presented a framework for the study on the primary decomposition of $\cT$. Theorem~\ref{theorem:T_1/T_2} immediately follows from Theorem~\ref{theorem:main} below which reveals a new structure of the primary decomposition of $\cT_1/\cT_2$ and $\scT_1/\scT_2$ by extending Theorems~C and D in \cite{Cha:2019-1} for $n=1$.

To state our result more explicitly, we set up notations. 

\begin{figure}[htb!]
\includegraphics{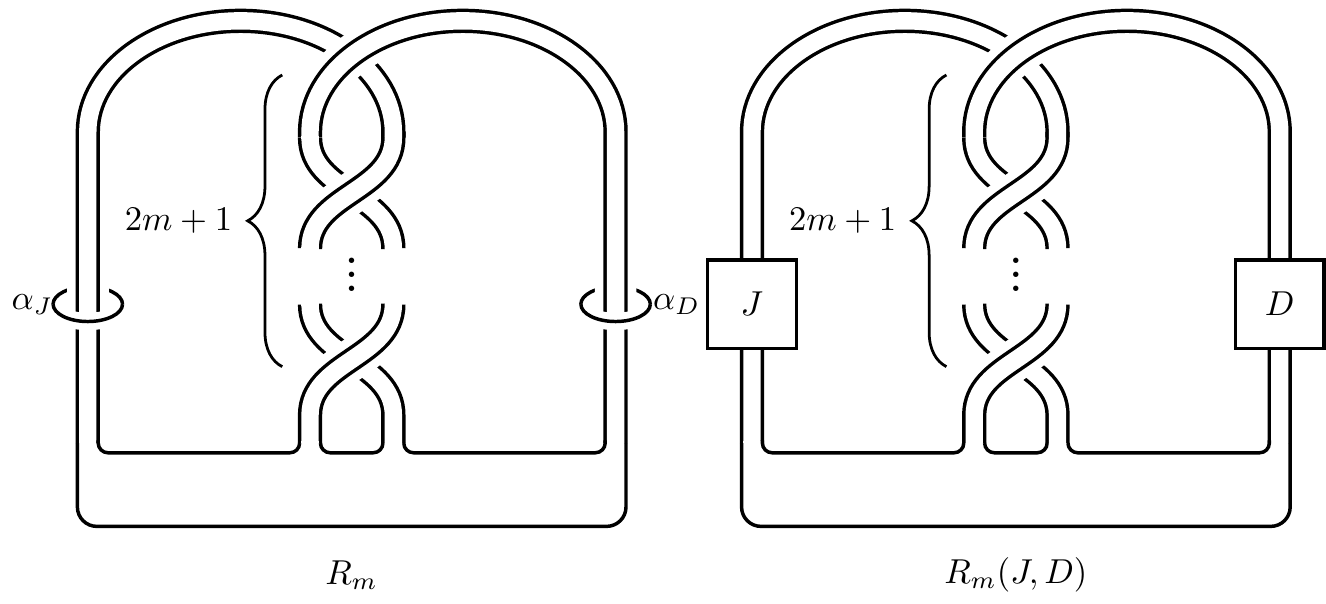}
\caption{The knot $R_m$ and the satellite knot $R_m(J,D)$.}\label{figure:seed-knot}
\end{figure}
Let $R_m$ be the genus one knot with Alexander polynomial $\lambda_m=(mt-(m+1))((m+1)t-m)\in \Z[t^{\pm 1}]$ and $\alpha_J$ and $\alpha_D$ be the curves dual to the bands of the Seifert surface of $R_m$ as depicted in Figure~\ref{figure:seed-knot}. For knots $J$ and $D$, we denote by $R_m(J, D)$ the knot obtained via the satellite construction which ties the knot $J$ (resp.\ $D$) through the band dual to the curve $\alpha_J$ (resp.\ $\alpha_D$) as depicted in Figure~\ref{figure:seed-knot}. For a knot $K$ and $t\in \Z$, let $D_+(K,t)$ (resp.\ $D_-(K,t)$) denote the $t$-twisted positive-clasped (resp.\ negative-clasped) Whitehead double of $K$. Let $D:=D_+(T_{2,3}, 0)$ for the right-handed trefoil $T_{2,3}$.  For $i\ge 1$, let $p_i$ be primes such that $5= p_1 < p_2 <\cdots$ and each $p_i$ is congruent to 1 modulo 4. Let $k_i:=\frac18(p_i-1)^2$ and $J_i:=D_-(k_i T_{2,3}, 2k_i)$. 

For a knot $K$ let $\Delta_K$ denote the Alexander polynomial of $K$. Let
\[
\cT^{\lambda_m}:=\{[K]\in \cC\,\mid\, \Delta_K \mbox{ is coprime to }\lambda_m\},
\]
and for a subgroup $S$ of $\cC$ let $S^{\lambda_m}:=S\cap \cT^{\lambda_m}$, $\scS :=S/S\cap \Delta$, and $\scS^{\lambda_m}:=S^{\lambda_m}/S^{\lambda_m}\cap \Delta$. Note that if $S$ is a subgroup of $\cT$, $\scS$ is a subgroup of $\scT$. The groups $\cT_1/(\cT_1^{\lambda_m} +\cT_2)$ and $\scT_1/(\scT_1^{\lambda_m} +\scT_2)$ are isomorphic since $\cT_1\cap \Delta$ is contained in $\cT_1^{\lambda_m}$.
\begin{theorem}\label{theorem:main} Let $m$ be an odd positive integer. For all $i\ge 1$, the knots $R_m(J_i,D)$, which have Alexander polynomial $\lambda_m$, are in $\cT_1$. If $K$ is a nontrivial linear combination of $R_m(J_i, D)$, then $K$ is not concordant to any knot with Alexander polynomial coprime to $\lambda_m$ modulo $\cT_2$. In particular, the groups $\cT_1/(\cT_1^{\lambda_m} +\cT_2)$ and $\scT_1/(\scT_1^{\lambda_m} +\scT_2)$ have infinite rank.   
\end{theorem}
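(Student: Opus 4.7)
The plan is to establish two things: that each $R_m(J_i,D)$ lies in $\cT_1$, and that no nontrivial integral combination $K$ of these knots can be concordant modulo $\cT_2$ to a knot whose Alexander polynomial is coprime to $\lambda_m$. For the first, I would observe that $R_m$ is topologically slice by Fox--Milnor (one has $\lambda_m\doteq f(t)f(t^{-1})$ with $f(t)=mt-(m+1)$), and then invoke the standard infection-along-a-Seifert-surface construction: the curves $\alpha_J,\alpha_D$ on the genus one Seifert surface of $R_m$ bound disjoint surfaces in a bipolar $4$-manifold built from the complement of a slice disk. The two companions are perfectly suited to this: $D=D_+(T_{2,3},0)$ has trivial Alexander polynomial and lies in $\Delta\subset\cT_0$, while $J_i=D_-(k_iT_{2,3},2k_i)$ is a negative-clasped Whitehead double, hence $\nu^+$-equivalent to the unknot and lying in $\cT_0$. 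The satellite-raises-filtration principle from \cite{Cochran-Harvey-Horn:2012-1,Cha-Kim:2017-1} then gives $R_m(J_i,D)\in\cT_1$.

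For the rigidity statement I would adapt the primary decomposition framework of Cha \cite{Cha:2019-1}, which settles the analogous statement for $n\ge 2$. Suppose for contradiction that a nontrivial integral combination $K$ of the knots $R_m(J_i,D)$ is concordant modulo $\cT_2$ to a knot $K^\circ$ with $\Delta_{K^\circ}$ coprime to $\lambda_m$, so that $K\csum(-K^\circ)\in\cT_2$. Let $N$ be the largest index whose coefficient in $K$ is nonzero, set $p=p_N$, and build a prime-$p$-power-order character on $H_1$ of the zero-surgery on $K\csum(-K^\circ)$ in a $\Z[t^{\pm1}]$-coefficient system chosen to expose the $\alpha_J$-satellite structure. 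The coprimality hypothesis on $\Delta_{K^\circ}$ kills the $K^\circ$-contribution after localization at $\lambda_m$, while the $L^2$-signature satellite formula produces a Cheeger--Gromov/von Neumann $\rho$-invariant obstruction expressible as a weighted sum of $\rhot(J_i;\omega)$'s for suitable characters $\omega$. The calibration $k_i=\tfrac{1}{8}(p_i-1)^2$ and the growing primes $5=p_1<p_2<\cdots$ with $p_i\equiv1\pmod 4$ are arranged so that standard Levine--Tristram signature estimates make the $i=N$ term dominate, forcing its coefficient to vanish --- the desired contradiction.

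The main obstacle, and the reason the $n=1$ case was previously open, is that such a $\rho$-invariant estimate obstructs concordance a priori but not concordance modulo $\cT_2$. Promoting it to a $\cT_2$-level obstruction requires a secondary invariant ensuring that the $\cT_2$-cobordism cannot absorb the $\rho$-invariant bound; concretely, this takes the form of a required vanishing of certain Ozsv\'ath--Szab\'o $d$-invariants of prime-power cyclic covers of the zero-surgery on the satellite link $R_m\cup\alpha_J\cup\alpha_D$ with companions $J_i$ and $D$. For $n\ge 2$ Cha handles the analogous secondary obstruction via iterated higher-order signature machinery, but for $n=1$ one needs a direct computation of these $d$-invariants. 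The new tool advertised in the abstract --- that $\nu^+$-equivalent companions induce the same $d$-invariants on the surgery manifolds of the corresponding satellite links --- makes this computation tractable by reducing each $J_i$ to the unknot (to which $J_i$ is $\nu^+$-equivalent). With this reduction the $d$-invariant vanishing becomes explicit, combines with the $\rho$-invariant bound to give the contradiction, and the infinite rank claim then follows by letting $N$ vary.
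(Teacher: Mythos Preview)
Your proposal captures the broad outline---adapt Cha's primary-decomposition framework, use the $\nu^+$-equivalence of $J_i$ to the unknot for the $d$-invariant step---but it misrepresents the architecture of the argument in a way that matters.

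The proof is not ``$\rho$-invariant as primary obstruction, $d$-invariant as secondary upgrade to the $\cT_2$ level.'' Rather, assuming $(\#_i a_iK_i)\# L$ is $2$-bipolar (with $\Delta_L$ coprime to $\lambda_m$ and $a_1>0$), one extracts a $1$-negaton $X^-$ bounding $M(K_1)$, and the kernel $P=\Ker\{H_1(M(K_1);\Q[t^{\pm1}])\to H_1(X^-;\Q[t^{\pm1}])\}$ is forced to equal either $\langle\alpha_D\rangle$ or $\langle\alpha_J\rangle$. These two cases are handled by \emph{independent} arguments: when $P=\langle\alpha_D\rangle$ the von Neumann $\rho$-invariant alone gives a contradiction; when $P=\langle\alpha_J\rangle$ the $d$-invariants of branched covers alone give a contradiction. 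The two obstructions are parallel, not stacked.

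Your diagnosis of why $n=1$ was previously open is accordingly off. The $\rho$-invariant argument \emph{does} work at the $\cT_2$ level without any help from $d$-invariants; that is already how the $\alpha_D$ case goes for $n\ge2$. The genuine obstacle was in the $\alpha_J$ case: the earlier papers compared $d(\Sigma_r(R_m(J,D)),\mathfrak{s})$ with $d(\Sigma_r(R_m(U,D)),\mathfrak{s})$ via the hypothesis that $J$ be unknotted by changing only \emph{positive} crossings, and the $J_i$ one needs for $n=1$ (which must be $0$-negative to place $R_m(J_i,D)$ in $\cT_1$) do not satisfy this. Theorem~\ref{theorem:d-invariant-satellite-intro} replaces the crossing-change hypothesis by $\nu^+$-equivalence to the unknot, which the $J_i$ do satisfy (they are genus-one knots with $\tau=0$).

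Finally, your $\rho$-invariant sketch (choose the largest index $N$, set $p=p_N$, prime-power character, dominant-term estimate) is not the route taken. The paper uses $\Q$ coefficients rather than $\Z_p$---this is listed explicitly as one of the key modifications to Cha's argument---and the contradiction in the $\alpha_D$ case comes from the \emph{linear independence over $\Q$} of the integrals $\int_{S^1}\sigma_{J_i}(\omega)\,d\omega$ (a consequence of the particular choice of $k_i$), not from a magnitude comparison in which one term dominates.
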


Since $\cT_1/\cT_2$ and $\scT_1/\scT_2$ surject to $\cT_1/(\cT_1^{\lambda_m} +\cT_2)$ and $\scT_1/(\scT_1^{\lambda_m} +\scT_2)$, respectively, Theorem~\ref{theorem:T_1/T_2} follows immediately from Theorem~\ref{theorem:main}. We remark that Theorem~\ref{theorem:main} extends \cite[Theorems~C and D]{Cha:2019-1} which show that the groups $\cT_n/(\cT_n^{\lambda_m} + \cT_{n+1})$ and $\scT_n/(\scT_n^{\lambda_m} + \scT_{n+1})$ have infinite rank for $n\geq 2$. 

To prove Theorem~\ref{theorem:main} we use the arguments in \cite{Cochran-Harvey-Horn:2012-1,Cha-Kim:2017-1,Cha:2019-1}, which combine the $d$-invariant and the von Neumann $\rho$-invariant, and in particular estimate the $d$-invariants of branched covers of $S^3$ along $R_m(J,D)$ for some knots $J$ (see \cite[Lemma~8.2]{Cochran-Harvey-Horn:2012-1} and \cite[Lemma~5.1]{Cha-Kim:2017-1}). To be precise, in \cite[Lemma~8.2]{Cochran-Harvey-Horn:2012-1} and \cite[Lemma~5.1]{Cha-Kim:2017-1}, the authors show that certain $d$-invariants of branched covers of $S^3$ along $R_m(U,D)$ are negative where $U$ is the unknot, and the knots $J$ are needed to be unknotted by changing only positive crossings to conclude that the corresponding $d$-invariants of branched covers of $S^3$ along $R_m(J,D)$ are also negative. Due to this restriction on $J$, one could not apply directly the arguments in \cite{Cochran-Harvey-Horn:2012-1,Cha-Kim:2017-1,Cha:2019-1} to the case of $\cT_1/\cT_2$ and show $\cT_1/\cT_2$ has infinite rank; note that our knots $J_i$ in Theorem~\ref{theorem:main} are not unknotted by changing only positive crossings. 

To resolve this issue and prove Theorem~\ref{theorem:main}, we show that the surgery manifolds of satellite links of $\nu^+$-equivalent knots with the same pattern link have the same $d$-invariants, which is of independent interest. (For the definition of $\nu^+$-equivalence, see Definition~\ref{definiton:nu+equivalence}.) In the following, $P(K)$ and $P(J)$ denote the satellite links of knots $K$ and $J$ with pattern link $P$, respectively. 
\begin{theorem}\label{theorem:d-invariant-satellite-intro}Let $K$ and $J$ be knots and $P$ be an ordered $k$-component link in $S^1\times D^2$. Let $\Lambda=(n_1,\ldots,n_k)$ be surgery coefficients such that $S^3_{\Lambda}(P(K))$ and $S^3_{\Lambda}(P(J))$ are $\Z_2$-homology spheres. If $K$ and $J$ are $\nu^+$-equivalent, then 
\[d(S^3_{\Lambda}(P(K)),\mathfrak{t}_K)=d(S^3_{\Lambda}(P(J)),\mathfrak{t}_J)\]
 where $\mathfrak{t}_K$ and $\mathfrak{t}_J$ are $\operatorname{Spin}^{c}$ structures satisfying $c_1(\mathfrak{t}_K)=c_1(\mathfrak{t}_J)$ under the bijection $H^2(S^3_{\Lambda}(P(K)))\to H^2(S^3_{\Lambda}(P(J)))$.
\end{theorem}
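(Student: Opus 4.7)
The plan is to pass $\nu^{+}$-equivalence through Hom's local equivalence on knot Floer complexes and then through a link surgery formula for Heegaard Floer homology. By Hom's characterization, $K \sim_{\nu^{+}} J$ is equivalent to $\operatorname{CFK}^{\infty}(K)$ and $\operatorname{CFK}^{\infty}(J)$ being \emph{locally equivalent}: there exist filtered, bigraded, $\mathbb{F}[U,U^{-1}]$-equivariant chain maps in both directions that become quasi-isomorphisms after inverting $U$. This algebraic relation preserves the $V_{s}$-invariants and, via the Ozsv\'ath-Szab\'o mapping cone formula, the $d$-invariants of all integer surgeries on a knot.

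To extend from surgeries on $K$ to surgeries on $P(K)$, I would invoke the Manolescu-Ozsv\'ath link surgery formula to realize $\operatorname{HF}^{-}(S^{3}_{\Lambda}(P(K)), \mathfrak{t}_{K})$ as the homology of a hypercube of chain complexes assembled from $\operatorname{CFK}^{-}$ of the components of $P(K)$, the pairwise linking numbers of components, and the surgery coefficients $\Lambda$. Each component of $P(K)$ is the satellite of $K$ by a pattern component of $P$, so via a satellite formula at the knot Floer level (in the style of Hom, Hanselman-Rasmussen-Watson, or Zemke), each $\operatorname{CFK}^{-}$ in the hypercube depends on $K$ only through $\operatorname{CFK}^{\infty}(K)$, while the linking numbers and framings depend only on $P$ and $\Lambda$. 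The satellite and hypercube constructions should be functorial with respect to local equivalence, so a local equivalence of $\operatorname{CFK}^{\infty}(K)$ and $\operatorname{CFK}^{\infty}(J)$ induces a local equivalence of the resulting $\operatorname{HF}^{-}$ complexes, matching $\Spinc$ structures via the bijection $H^{2}(S^{3}_{\Lambda}(P(K))) \to H^{2}(S^{3}_{\Lambda}(P(J)))$. Since the $d$-invariant equals the Maslov grading of the bottom of the $U$-tower in $\operatorname{HF}^{-}$, and this grading is preserved under local equivalence, the equality $d(S^{3}_{\Lambda}(P(K)), \mathfrak{t}_{K}) = d(S^{3}_{\Lambda}(P(J)), \mathfrak{t}_{J})$ follows. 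An alternative, essentially equivalent, route would use the bordered Heegaard Floer pairing $\widehat{\operatorname{CFA}}(E(K)) \boxtimes \widehat{\operatorname{CFD}}(M_{\Lambda}(P))$ associated to the natural decomposition $S^{3}_{\Lambda}(P(K)) = E(K) \cup_{T^{2}} M_{\Lambda}(P)$, where $E(K) = S^{3} \setminus \nu(K)$ and $M_{\Lambda}(P) = (S^{1} \times D^{2})_{\Lambda}(P)$ is the fixed $\Lambda$-surgered pattern complement.

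The main obstacle is carrying local equivalence through the satellite operation and the link surgery hypercube while preserving the $\Spinc$-grading. This requires verifying that the satellite operation on knot Floer complexes is functorial with respect to local equivalence, and that the hypercube construction assembles such local equivalences into a single local equivalence of the final Heegaard Floer complex — all while correctly matching Maslov gradings and $\Spinc$ structures under the $H^{2}$-bijection. Concretely, the local equivalence maps between $\operatorname{CFK}^{\infty}(K)$ and $\operatorname{CFK}^{\infty}(J)$ need to lift to maps commuting with every higher structure map of the hypercube up to chain homotopy, which is the technical crux of the argument.
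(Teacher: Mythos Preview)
Your proposal is not so much a proof as a program, and the program has real gaps. You yourself identify the ``main obstacle'' and ``technical crux'' at the end without resolving either. Concretely: the Manolescu--Ozsv\'ath link surgery formula takes as input the full link Floer complex $CFL^\infty(P(K))$, not the knot Floer complexes of the individual components, and there is no off-the-shelf statement that a local equivalence $CFK^\infty(K)\sim CFK^\infty(J)$ induces a local equivalence $CFL^\infty(P(K))\sim CFL^\infty(P(J))$ compatible with all hypercube structure maps. Your bordered alternative has a different problem: the pairing $\widehat{CFA}\boxtimes\widehat{CFD}$ computes $\widehat{CF}$, which does not see $d$-invariants; a minus-flavored bordered theory exists but is far heavier machinery than you indicate, and tracking absolute gradings and $\Spinc$ structures through it is genuinely delicate.

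The paper bypasses all of this with an elementary geometric construction. It builds, by hand, a smooth $4$-manifold $Z$ with boundary $S^3_\Lambda(P(K))\#-S^3_\Lambda(P(J))\#S^3_0(J\#-K)$ and trivial intersection form. The construction uses a planar cobordism in $H\times[0,1]$ (for $H$ the genus-$2$ handlebody) from $P\sqcup -P$ to parallel copies of a single curve $C$, then embeds $H$ in $S^3$ so that the two handles carry $K$ and $-J$, caps off the copies of $K\#-J$ with a $0$-framed $2$-handle, and does $\Lambda$-surgery along the resulting annuli. Applying the Levine--Ruberman inequality for negative semidefinite $4$-manifolds to $Z$, together with $d_{-1/2}(S^3_0(J\#-K))=-\tfrac12$ (which is equivalent to $\nu^+$-equivalence), gives one inequality between the two $d$-invariants; swapping $K$ and $J$ gives the other. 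No surgery formulas, no bordered theory, no local equivalence functoriality --- just a cobordism and the standard $d$-invariant bound. This is both simpler and more robust than what you outline.
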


Theorem~\ref{theorem:d-invariant-satellite-intro} is proved in Subsection~\ref{subsection:satellite-links}. This can be regarded as a link analogue of the fact that satellite knots of $\nu^+$-equivalent knots with the same pattern knot are $\nu^+$-equivalent which was proved in \cite{Kim-Park:2016-1,Sato:2017-1}.

Theorem~\ref{theorem:main} has an application to concordance genus. Recall that the \emph{concordance genus} of a knot is the minimum genus among all knots concordant to the knot. Using the $\varepsilon$-invariant Hom \cite{Hom:2015-1}  showed that for each $g\ge 1$ there exists a topologically slice knot of concordance genus $g$ and 4-genus one. There are also lower bounds on concordance genus obtained from the $\Upsilon$-invariant \cite{Ozsvath-Stipsicz-Szabo:2014-1} and the $\varphi_j$-invariants \cite{DHST:2019-1}. Note that if a knot is $\nu^+$-equivalent to the unknot, then it has vanishing $\varepsilon$-invariant and $\Upsilon$-invariant \cite{Hom:2017-1}, and it also has vanishing $\varphi_j$-invariants \cite{DHST:2019-1}. 

\begin{theorem}\label{theorem:concordance-genus}
For each $g\ge 1$, there exists a topologically slice knot of concordance genus $g$ which is $\nu^+$-equivalent to the unknot.
\end{theorem}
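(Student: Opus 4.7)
The plan is to produce, for each $g\ge 1$, a connected sum $K_g=\csumover{1\le i\le g}L_i$ of $g$ topologically slice knots $L_i$, each of Seifert genus one, with pairwise distinct Alexander polynomials $\lambda_{m_i}$ for $g$ odd positive integers $m_1,\dots,m_g$, and each individually $\nu^+$-equivalent to the unknot. Because topological sliceness, the Seifert-genus bound $g(K_g)\le g$, and $\nu^+$-equivalence to the unknot all pass to connected sums, $K_g$ will be topologically slice, $\nu^+$-equivalent to the unknot, and have concordance genus at most $g$; the main task is then to prove a matching lower bound.

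Each $L_i$ will be built as a satellite $L_i=R_{m_i}(\tilde J_i,\tilde D_i)$ with topologically slice infection knots $\tilde J_i\sim_{\nu^+} J_1$ and $\tilde D_i\sim_{\nu^+} D$ (with $J_1$ and $D$ as in Theorem~\ref{theorem:main}), chosen so that the satellite itself is $\nu^+$-equivalent to the unknot. By Theorem~\ref{theorem:d-invariant-satellite-intro}, the $\nu^+$-equivalence of the infections implies that the $d$-invariants of the surgery manifolds appearing in the proof of Theorem~\ref{theorem:main} coincide for $L_i$ and $R_{m_i}(J_1,D)$; consequently, the conclusion of Theorem~\ref{theorem:main} transfers to $L_i$: it is not concordant to any knot with Alexander polynomial coprime to $\lambda_{m_i}$ modulo $\cT_2$. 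Note also that each $L_i$ has Seifert genus one (the infection preserves the genus-one Seifert surface of $R_{m_i}$) and Alexander polynomial $\lambda_{m_i}$, independent of the choice of infection knots.

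With this setup, I would establish the lower bound on the concordance genus of $K_g$ by contradiction. Suppose $K_g$ is concordant to some $K'$ with $g(K')\le g-1$, so that $\deg\Delta_{K'}\le 2g-2$. Since $\Delta_{K'}$ is a reciprocal polynomial and the $\lambda_{m_1},\dots,\lambda_{m_g}$ are pairwise coprime in $\Q[t]$, at most $g-1$ of the $\lambda_{m_i}$ can divide $\Delta_{K'}$; hence there exists an index $i_0$ for which $\Delta_{K'}$ is coprime to $\lambda_{m_{i_0}}$. In the concordance group,
\[
L_{i_0}=K_g\csum\bigl(-\!\csumover{i\ne i_0}L_i\bigr)\sim K'\csum\bigl(-\!\csumover{i\ne i_0}L_i\bigr),
\]
and the right-hand side has Alexander polynomial $\Delta_{K'}\cdot\prod_{i\ne i_0}\lambda_{m_i}$, which is coprime to $\lambda_{m_{i_0}}$ (since each factor is). This furnishes a knot concordant to $L_{i_0}$ with Alexander polynomial coprime to $\lambda_{m_{i_0}}$, in direct contradiction with the conclusion of the previous paragraph.

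The hardest part will be the construction of the $L_i$ satisfying both the $\nu^+$-equivalence of the infections to $(J_1,D)$ (needed to transfer Theorem~\ref{theorem:main} via Theorem~\ref{theorem:d-invariant-satellite-intro}) and the $\nu^+$-equivalence of $L_i$ itself to the unknot. This is precisely where Theorem~\ref{theorem:d-invariant-satellite-intro} is essential: by decoupling the $d$-invariant data from the finer concordance type of the infection knots, it leaves enough room to replace $J_1$ and $D$ by suitable $\nu^+$-equivalent companions making the satellite knot $\nu^+$-trivial without disturbing the $d$-invariant computation that drives the lower bound on the concordance genus.
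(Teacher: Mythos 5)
Your lower-bound argument is sound and is essentially the paper's: take a connected sum of genus-one knots $R_{m_i}(\cdot,\cdot)$ with pairwise coprime Alexander polynomials $\lambda_{m_i}$, and if the sum were concordant to a knot $K'$ of genus $<g$, a degree count (plus reciprocity of $\Delta_{K'}$) produces an index $i_0$ with $\lambda_{m_{i_0}}$ coprime to $\Delta_{K'}\cdot\prod_{i\ne i_0}\lambda_{m_i}$, contradicting Theorem~\ref{theorem:main}. The gap is in the other half. You leave the construction of the $L_i$ --- simultaneously $\nu^+$-trivial and subject to Theorem~\ref{theorem:main} --- as ``the hardest part,'' and the route you sketch for it does not go through: Theorem~\ref{theorem:d-invariant-satellite-intro} only controls the $d$-invariants, whereas the proof of Theorem~\ref{theorem:main} also needs (i) the infection knots to be $0$-negative resp.\ $0$-positive so that $R_m(\tilde J_i,\tilde D_i)\in\cT_1$, and (ii) the von Neumann $\rho$-invariant input $\int_{S^1}\sigma_{\tilde J_i}(\omega)\,d\omega$ in Case~1. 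Neither $0$-(bi)polarity nor the integrated Levine--Tristram signature is preserved under $\nu^+$-equivalence of the infection knots, so replacing $J_1,D$ by $\nu^+$-equivalent companions does not automatically transfer the conclusion of Theorem~\ref{theorem:main}.

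The missing idea is that no modification of the infection knots is needed: the original knots already are $\nu^+$-trivial. Set $K_i:=R_{2i-1}(J_1,D)$ and $K:=\csumover{}_{i=1}^{g}K_i$. Since $K\in\cT_1$, \cite[Corollary~6.7]{Cochran-Harvey-Horn:2012-1} gives $d(S^3_1(K))=0$, hence $V_0(K)=-\frac12 d(S^3_1(K))=0$ and $\nu^+(K)=0$; the same applies to $-K\in\cT_1$, so $K$ is $\nu^+$-equivalent to the unknot. In other words, membership in $\cT_1$ --- which you already have for the knots of Theorem~\ref{theorem:main} --- forces the vanishing of $\nu^+$, so the tension you were trying to resolve by re-engineering the satellites does not arise. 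With that observation your argument closes and coincides with the paper's proof.
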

\begin{proof}
For $1\le i\le g$, let $K_i:=R_{2i-1}(J_1,D)$ and $K:=\#_{i=1}^gK_i$ (recall that $J_1=D_-(2T_{2,3}, 4)$ and $D=D_+(T_{2,3},0)$). Then, $K$ is a topologically slice knot of genus $g$ and the degree of $\Delta_K$ is $2g$. 
 
 Since $K$ is in $\cT_1$, by \cite[Corollary~6.7]{Cochran-Harvey-Horn:2012-1} $d(S^3_1(K))=0$. Therefore $V_0(K)=-\frac12d(S^3_1(K))=0$ where $V_0$ is a knot concordance invariant given in Subsection~2.2 of \cite{Ni-Wu:2015-1}. Since $\nu^+$ is the smallest $k\ge 0$ such that $V_k=0$, it follows that $\nu^+(K)=0$ and hence $K$ is $\nu^+$-equivalent to the unknot. 
 
 Suppose $J$ is a knot of genus $g'<g$ and $K\# -J$ is slice. Since the degree of $\Delta_J$ is less than $2g$, there is some $j\in \{1,2,\ldots, g\}$ such that $\Delta_{K_j}$ is not a factor of $\Delta_J$. Then, for $K':=((\#_{i\ne j}K_i)\# -J)$, we have $K_j\# K' = K\# -J$ is slice and $\Delta_{K_j}$ is coprime to $\Delta_{K'}$. This contradicts Theorem~\ref{theorem:main}. 
\end{proof}

We remark that Theorem~\ref{theorem:concordance-genus} can also be shown using the knots in \cite[Theorem~D]{Cha:2019-1}.

If a topologically slice knot $K$ is not in $\cT_1$, one cannot show that $K$ is nontrivial in $\cT/\cT^{\Delta_K}$ using the arguments in the proof of Theorem~\ref{theorem:main}. In Appendix, combining the results in \cite{Kim:2005-2, Kim-Kim:2008-1, Bao:2015-1, Kim-Kim:2018-1}, we give an obstruction for a knot $K$ to being in $\cT^{\Delta_K}$ (see Theorem~\ref{theorem:metabelian-obstruction}), and using it we show that the knot $R_1(J,D)$ with $J:=D_+(T_{2,3},0)\# -T_{2,3}$ is nontrivial in $\cT/\cT^{\lambda_1}$ (see Proposition~\ref{proposition:example}). It is unknown to the authors whether or not the knot $R_1(J,D)$  is in $\cT_1$.

The rest of the paper is organized as follows. In Section~\ref{section:2}, we introduce the bipolar filtration and prove Theorem~\ref{theorem:d-invariant-satellite-intro}, and in Section~\ref{section:proof of Theorem A} we prove Theorem~\ref{theorem:main}. In Appendix, we prove Theorem~\ref{theorem:metabelian-obstruction}.

In this paper, all homology groups are understood with integer coefficients unless mentioned otherwise. 

\subsubsection*{Acknowledgements}
The authors thank Jae Choon Cha for sharing with them his insight on the subject and his early drafts of \cite{Cha:2019-1}.

\section{The bipolar filtration, satellite links and $\nu^+$-equivalence}\label{section:2}
\subsection{The bipolar filtration}\label{subsection:bipolar-filtration}
In this subsection, we recall the definition of the bipolar filtration $\{\cT_n\}$ introduced in \cite{Cochran-Harvey-Horn:2012-1}. Throughout this paper, for a knot $K$ we denote by $M(K)$ the zero-framed surgery on $K$ in $S^3$. For a group $G$, we define $G^{(0)}:=G$ and $G^{(n+1)}:=[G^{(n)},G^{(n)}]$ for each $n\ge 0$. 
\begin{definition}[{\cite[Definition~5.1]{Cochran-Harvey-Horn:2012-1}}]\label{definition:n-bipolar}
Let $n\ge 0$ be an integer. A knot $K$ is \emph{$n$-positive} if $M(K)$ bounds a compact connected oriented smooth 4-manifold $W$ which satisfies the following.
\begin{enumerate}
\item The inclusion-induced homomorphism $H_1(M(K))\to H_1(W)$ is an isomorphism and $\pi_1(W)$ is normally generated by a meridian of $K$.
\item $H_2(W)$ has a basis which consists of disjointly embedded compact connected oriented surfaces $S_i$, $1\le i\le r$, such that for each $i$, the surface $S_i$ has a normal bundle of Euler class 1 and $\pi_1(S_i)\subset \pi_1(W)^{(n)}$. 
\end{enumerate}
The 4-manifold $W$ is called an \emph{$n$-positon}. Similarly, by changing the Euler class condition from 1 to $-1$, we define an \emph{$n$-negative knot} and an \emph{$n$-negaton}. A knot is \emph{$n$-bipolar} if it is $n$-positive and $n$-negative. We let $\cT_n:=\{[K]\in \cT\,\mid \, K\textrm{ is $n$-bipolar}\}$.
\end{definition}

Each $\cT_n$ is a subgroup of $\cT$ and since for $m\ge n$, an $m$-bipolar knot is $n$-bipolar, we have a descending filtration
\[
0\subset \cdots \subset \cT_{n+1}\subset \cT_n\subset \cdots \subset \cT_1\subset \cT_0\subset \cT.
\]

\subsection{$\nu^+$-equivalence}
Let $\nu^+$ be the knot concordance invariant introduced by 
Hom and Wu \cite{Hom-Wu:2016-1}. Using the $\nu^+$-invariant, one can consider the following equivalence relation coarser than concordance. 
\begin{definition}\label{definiton:nu+equivalence}Two knots 
$K$ and $J$ in 
$S^3$ are \emph{$\nu^+$-equivalent} if 
\[\nu^+(K\# -J) =\nu^+(-K\# J)=0.\]
\end{definition}

\begin{remark}\label{remark:nu+-equivalence}  By \cite[Proposition~3.11]{Hom:2017-1}, the following conditions are equivalent (see also \cite[Lemma~3.1]{Kim-Park:2016-1}).
\begin{enumerate}
\item Two knots $K$ and $J$ are $\nu^+$-equivalent.
\item $CFK^\infty(K)$ and $CFK^\infty(J)$ are stably chain homotopy equivalent. More precisely,  there are two acyclic chain complexes $A$ and $A'$ such that 
$CFK^\infty(K)\oplus A$ and $CFK^\infty(J)\oplus A'$ are filtered chain homotopy equivalent. 
\item $d_{\pm 1/2}(S^3_0(K\# -J))=d_{\pm 1/2}(S^3_0(J\# -K))=\pm\frac{1}{2}$.
 \end{enumerate}
 
For example, $D_+(T_{2,3})\#-T_{2,3}$ is $\nu^+$-equivalent to the unknot since $CFK^\infty(D_+(T_{2,3}))$ is filtered chain homotopy equivalent to $CFK^\infty(T_{2,3})\oplus A$ for some acyclic chain complex $A$ by Proposition~6.1 of \cite{Hedden-Kim-Livingston:2016-1}.

\end{remark}

\subsection{Satellite links with $\nu^+$-equivalent companions}\label{subsection:satellite-links}
We recall Theorem~\ref{theorem:d-invariant-satellite-intro} below.

\begin{theorem}\label{theorem:d-invariant-satellite}Let $K$ and $J$ be knots and $P$ be an ordered $k$-component link in $S^1\times D^2$. Let $\Lambda=(n_1,\ldots,n_k)$ be surgery coefficients such that $S^3_{\Lambda}(P(K))$ and $S^3_{\Lambda}(P(J))$ are $\Z_2$-homology spheres. If $K$ and $J$ are $\nu^+$-equivalent, then 
\[d(S^3_{\Lambda}(P(K)),\mathfrak{t}_K)=d(S^3_{\Lambda}(P(J)),\mathfrak{t}_J)\]
 where $\mathfrak{t}_K$ and $\mathfrak{t}_J$ are $\operatorname{Spin}^{c}$ structures satisfying $c_1(\mathfrak{t}_K)=c_1(\mathfrak{t}_J)$ under the canonical bijection $H^2(S^3_{\Lambda}(P(K)))\to H^2(S^3_{\Lambda}(P(J)))$.
\end{theorem}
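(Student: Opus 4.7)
The plan is to reduce the statement to a Floer-theoretic fact about the stable chain homotopy type of $CFK^\infty$, using the characterization of $\nu^+$-equivalence in Remark~\ref{remark:nu+-equivalence}(2), and then to propagate this through a surgery/satellite formula for Heegaard Floer homology.

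First, I would exhibit the surgery manifold as a toroidal gluing
\[
S^3_\Lambda(P(K)) = E_K \cup_T N_\Lambda(P),
\]
where $E_K = S^3 \sm \nu(K)$ is the knot exterior, $T = \partial E_K$ is the boundary torus, and $N_\Lambda(P)$ is the compact $3$-manifold with torus boundary obtained from the standard solid torus $S^1 \times D^2$ carrying $P$ by Dehn surgery with framings $\Lambda$ on the components of $P$. Since $\nu(K)$ is glued to $E_K$ by the $0$-framing, the surgery coefficients $n_i$ measured in $S^3$ agree with those measured inside the solid torus, so $N_\Lambda(P)$ depends only on $P$ and $\Lambda$, not on $K$. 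The analogous decomposition $S^3_\Lambda(P(J)) = E_J \cup_T N_\Lambda(P)$ then shows that the two surgery manifolds differ only in the factors $E_K$ and $E_J$, and the canonical bijection $H^2(S^3_\Lambda(P(K))) \to H^2(S^3_\Lambda(P(J)))$ in the theorem statement is induced by this common decomposition through the natural identification of the cohomology of the two knot exteriors.

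Second, I would invoke Remark~\ref{remark:nu+-equivalence}(2) to get acyclic complexes $A, A'$ with $CFK^\infty(K) \oplus A \simeq CFK^\infty(J) \oplus A'$ as filtered chain complexes. The technical heart of the proof is to propagate this stable equivalence through a Heegaard Floer gluing/surgery formula into a stable chain homotopy equivalence of complexes computing $CF^-$ of the two glued $3$-manifolds. Two natural routes present themselves: (i) the Manolescu--Ozsv\'ath link surgery formula, which expresses $CF^-(S^3_\Lambda(P(K)))$ as a mapping cone built from the link Floer complex of $P(K)$ together with the framings $\Lambda$, combined with a satellite formula that determines this link Floer complex functorially from $CFK^\infty(K)$; and (ii) bordered Heegaard Floer homology, pairing the bordered invariant of $E_K$ (determined by $CFK^\infty(K)$ via Lipshitz--Ozsv\'ath--Thurston) with that of $N_\Lambda(P)$. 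In either framework, replacing $CFK^\infty(K)$ by $CFK^\infty(J) \oplus (\text{acyclic})$ produces a stably chain homotopy equivalent model for $CF^-(S^3_\Lambda(P(K)))$. Since $d$-invariants depend on $CF^-$ only through its $U$-action modulo acyclic summands, the matching of $\Spin^c$ structures on the shared piece $N_\Lambda(P)$ (guaranteed by $c_1(\mathfrak{t}_K) = c_1(\mathfrak{t}_J)$) then gives the desired equality $d(S^3_\Lambda(P(K)), \mathfrak{t}_K) = d(S^3_\Lambda(P(J)), \mathfrak{t}_J)$.

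The main obstacle will be making precise the propagation of stable equivalence in the second step, i.e., showing that the surgery/satellite construction respects acyclic summands in $CFK^\infty$. For pattern \emph{knots} with integer surgery on the single satellite component, this is essentially the content of \cite{Kim-Park:2016-1, Sato:2017-1}, whose proofs exploit the functoriality of the cabling/satellite formula for the knot Floer complex. Extending to the link setting of Theorem~\ref{theorem:d-invariant-satellite}, with multiple satellite components and a general surgery vector $\Lambda$, requires either proving a satellite formula at the level of the link Floer complex of $P(K)$ (generalizing the knot-level argument of Kim--Park and Sato) or a careful invocation of bordered Floer pairing in the minus-category. In either approach, the essential algebraic point is that acyclic summands at the $CFK^\infty$-level produce acyclic summands at the $CF^-$-level after the surgery construction, leaving the $d$-invariant of the resulting $\Z_2$-homology sphere unchanged.
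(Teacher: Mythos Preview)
Your approach is genuinely different from the paper's, and while the strategy is plausible in outline, it leans on machinery that is either unavailable or very heavy in the link setting, whereas the paper's argument is elementary and self-contained.

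The paper does not use the stable chain-homotopy characterization of $\nu^+$-equivalence (Remark~\ref{remark:nu+-equivalence}(2)) at all. It uses characterization~(3), namely $d_{-1/2}(S^3_0(J\#-K))=-\tfrac12$, together with a $4$-manifold argument. Concretely, the paper constructs (Proposition~\ref{proposition:semidefinite}, via the planar-cobordism Lemma~\ref{lemma:planar-cobordism}) a compact smooth $4$-manifold $Z$ with trivial intersection form and
\[
\partial Z \cong S^3_\Lambda(P(K))\,\#\,{-S^3_\Lambda(P(J))}\,\#\,S^3_0(J\#-K),
\]
with $H^2(Z)$ controlling the $\Spinc$ structures compatibly with the canonical bijection. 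The Levine--Ruberman inequality for negative semidefinite $4$-manifolds then gives
\[
0\le 4d(S^3_\Lambda(P(K)),\mathfrak{t}_K)-4d(S^3_\Lambda(P(J)),\mathfrak{t}_J)+4d_{-1/2}(S^3_0(J\#-K))+2,
\]
and plugging in $d_{-1/2}=-\tfrac12$ yields one inequality; swapping $K$ and $J$ gives the other. No surgery formula, no bordered theory, no satellite formula for Floer complexes is needed---only a direct geometric construction and a single $d$-invariant inequality.

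The obstacle you flag in your second step is real and, for links, not easily surmounted with off-the-shelf tools. The bordered pairing theorem in the hat category yields only $\widehat{HF}$, which does not determine $d$-invariants; bordered $HF^-$ exists but is substantially more delicate to invoke. On the link-surgery side, a satellite formula expressing the \emph{link} Floer complex of $P(K)$ functorially in $CFK^\infty(K)$ is not available in the literature in a citable form. The knot-pattern results of \cite{Kim-Park:2016-1,Sato:2017-1} go through precisely because one is computing $CFK^\infty$ of a single satellite \emph{knot}, where the functoriality is well understood; the multi-component $\Lambda$-surgery here is a genuine additional step that your proposal does not resolve. The paper's cobordism argument sidesteps all of this.
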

\begin{proof}We prove Theorem~\ref{theorem:d-invariant-satellite} assuming Proposition~\ref{proposition:semidefinite} below. Let $a\in G$ be the image of $c_1(\mathfrak{t}_K)$ via the isomorphism given in Proposition~\ref{proposition:semidefinite} (1). By Proposition~\ref{proposition:semidefinite} (3), $(0,a)\in \Z\oplus G\cong H^2(Z)$ is a characteristic element, so we can choose a Spin$^c$ structure $\mathfrak{s}$ on $Z$ such that 
$\mathfrak{s}$ restricts to $\mathfrak{t}_0$, $\mathfrak{t}_K$ and $\mathfrak{t}_J$ on $S^3_0(J\# -K)$, $S^3_{\Lambda}(P(K))$, and $S^3_{\Lambda}(P(J))$, respectively, where $\mathfrak{t}_0$ is the torsion Spin$^c$ structure on $S^3_0(J\# -K)$. Since $b_1(\d Z)=1$, $\d Z$ has trivial triple cup product, and hence  $\d Z$ has standard $HF^\infty$  by Theorem~3.2 of \cite{Levine-Ruberman:2014-1}. 
Since $Z$ is negative semidefinite, by Theorem~4.7 of \cite{Levine-Ruberman:2014-1},
\[0\leq 4d_{bot}(\d Z,\mathfrak{s}|_{\d Z})+2.\]
On the other hand, $(\d Z,\mathfrak{s}|_{\d Z})=( S^3_{\Lambda}(P(K)),\mathfrak{t}_K)\# (-S^3_{\Lambda}(P(J)),\mathfrak{t}_J)\# (S^3_0(J\# -K),\mathfrak{t}_0)$. By the additivity in  \cite[Proposition~4.3]{Levine-Ruberman:2014-1}, the inequality becomes
\[0\leq 4d( S^3_{\Lambda}(P(K)),\mathfrak{t}_K)-4d(S^3_{\Lambda}(P(J)),\mathfrak{t}_J)+4d_{-1/2}(S^3_0(J\# -K))+2.\]
Since $K$ and $J$ are $\nu^+$-equivalent, by Remark~\ref{remark:nu+-equivalence}, $d_{-1/2}(S^3_0(J\# -K))=-\frac{1}{2}$. It follows that 
\[d( S^3_{\Lambda}(P(K)),\mathfrak{t}_K)\geq d(S^3_{\Lambda}(P(J)),\mathfrak{t}_J).\] By changing the roles of $J$ and $K$, we obtain the desired equality. 
\end{proof}
\begin{proposition}\label{proposition:semidefinite} Under the same hypothesis as in Theorem~\ref{theorem:d-invariant-satellite}, there is a compact connected oriented smooth $4$-manifold $Z$ with $\d Z\cong S^3_{\Lambda}(P(K))\#-S^3_{\Lambda}(P(J))\# S^3_0(J\#-K)$ satisfying the following.

\begin{enumerate}
\item\label{item:spinc} $H^2(Z)\cong \Z \oplus G$ such that the $\Z$-summand is isomorphic to $H^2(S^3_0(J\# -K))$ and the group $G$ is isomorphic to both $H^2(S^3_{\Lambda}(P(K)))$ and $H^2(S^3_{\Lambda}(P(J)))$ via the inclusion-induced maps. The composition of the isomorphisms
\[H^2(S^3_{\Lambda}(P(K)))\to G\to H^2(S^3_{\Lambda}(P(J)))\]
coincide with the  canonical bijection $H^2(S^3_{\Lambda}(P(K)))\to H^2(S^3_{\Lambda}(P(J)))$.
\item\label{item:firsthomology} $H_1(Z)\cong G$.
\item\label{item:intersection}The intersection form of $Z$ is trivial. In particular, $Z$ is negative semidefinite.
\end{enumerate}
\end{proposition}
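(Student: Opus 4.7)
The plan is to construct $Z$ explicitly by excising a pair-of-pants cobordism from $S^3\times[0,1]$ and then gluing in pattern pieces and a $2$-handle. Let $W := (S^1\times D^2)_\Lambda(P)$ denote the pattern $3$-manifold with torus boundary, so that $S^3_\Lambda(P(K)) = E_K \cup_{T^2} W$ and analogously for $J$. Choose a genus-zero surface $\Sigma \subset S^3\times[0,1]$ with $\partial\Sigma \cap (S^3\times\{0\}) = K \sqcup (-J)$ (a split link) and $\partial\Sigma \cap (S^3\times\{1\}) = J\#(-K)$; such a $\Sigma$ exists by a standard fusion band move. Set $V := (S^3\times[0,1])\setminus \nu(\Sigma)$; its boundary $3$-manifold contains the exteriors $E_K$, $E_J$, $E_{J\#-K}$ joined by the tube $\Sigma\times S^1$. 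Build $Z$ from $V$ by gluing one copy of $W\times[0,1]$ along each of the two tori $\partial E_K$, $\partial E_J \subset \partial V$ (which closes those exteriors into $S^3_\Lambda(P(K))$ and $-S^3_\Lambda(P(J))$ on the boundary) and by attaching a $0$-framed $2$-handle along $\lambda_{J\#-K} \subset \partial E_{J\#-K}$ (which closes the remaining exterior into $S^3_0(J\#-K)$). Connectedness of $\Sigma$ makes $\partial Z$ a single $3$-manifold, equal with the natural orientation to $S^3_\Lambda(P(K)) \# -S^3_\Lambda(P(J)) \# S^3_0(J\#-K)$.

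To verify (1) and (2), I will apply a Mayer--Vietoris argument to the decomposition of $Z$ into $V$, the two pattern pieces, and the $2$-handle. A key observation is that the linking matrix of $P(K)$ with framings $\Lambda$ is computed inside $\nu(K)$ using the $0$-framing of $K$, so it is independent of the companion; hence there is a canonical isomorphism $H^2(S^3_\Lambda(P(K))) \cong H^2(S^3_\Lambda(P(J)))$, and I let $G$ denote this common group. Each of the two pattern pieces contributes a copy of $G$ to $H^2(Z)$, and these get identified via the pair-of-pants structure into a single $G$-summand of $H^2(Z)$ onto which both inclusions from the boundary induce isomorphisms that agree with the canonical bijection. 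The $0$-framed $2$-handle contributes the extra $\Z$-summand coming from the core disk capped off by a Seifert surface of $J\#-K$, giving $H^2(Z) \cong \Z \oplus G$. The corresponding computation in the long exact sequence yields $H_1(Z) \cong G$, establishing (2).

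For (3), I will show that the free part of $H_2(Z)$ has rank one, generated by the closed surface $\widehat{F} := F \cup D$, where $F \subset S^3\times\{1\} \subset V$ is a Seifert surface for $J\#-K$ pushed slightly into $V$ and $D$ is the core of the $2$-handle. Since the framing is $0$, the normal bundle of $\widehat{F}$ is trivial and so $\widehat{F}\cdot\widehat{F} = 0$. All other $H_2$-classes lie in the torsion subgroup and pair trivially under the $\Z$-valued intersection form, so the intersection form on $Z$ is identically zero, and $Z$ is in particular negative semidefinite. The principal technical obstacle is the Mayer--Vietoris bookkeeping in the second step: explicitly verifying that the images of the two inclusion-induced maps $H^2(S^3_\Lambda(P(K))) \to H^2(Z)$ and $H^2(S^3_\Lambda(P(J))) \to H^2(Z)$ coincide as a single $G$-summand and correspond under the canonical bijection.
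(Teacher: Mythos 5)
Your overall strategy---realizing each surgered satellite manifold as (companion exterior) $\cup$ (pattern piece) and building $Z$ from the complement of a pair of pants between the companion knots---is genuinely different from the paper's, but the construction of $Z$ as you describe it does not produce a $4$-manifold with the claimed boundary, and this is a real gap rather than bookkeeping. ``Gluing one copy of $W\times[0,1]$ along the torus $\partial E_K$'' is not a well-defined operation on $4$-manifolds; the only reasonable reading is to glue $\partial W\times[0,1]\cong T^2\times[0,1]$ to a bicollar of the torus $\partial E_K$ inside the $3$-manifold $\partial V$. But that cuts $\partial V$ open along $\partial E_K$ and caps \emph{both} sides with a copy of $W$: one copy closes $E_K$ into $S^3_{\Lambda}(P(K))$ as a separate closed boundary component, while the other copy faces the tube $\Sigma\times S^1$. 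After doing this at $\partial E_K$ and at $\partial E_J$ and attaching the $2$-handle, $\partial Z$ has three components, one of which is an unwanted closed manifold of the form $W\cup_{T^2}(\Sigma\times S^1)\cup_{T^2}W\cup_{T^2}(\text{surgery solid torus})$; it is not the connected sum in the statement. The underlying obstruction is that one cannot cap off the pattern pieces independently at the three ends of the pair of pants: what is needed is an embedded cobordism of the pattern links themselves, carrying $P(K)\sqcup -P(J)$ to parallel copies of $K\#-J$ inside a thickened handlebody. This is exactly the content of Lemma~\ref{lemma:planar-cobordism} (a genus-$0$ cobordism in $H\times[0,1]$ from $P\sqcup -P$ to parallel copies of the core curve $C$, built by an induction on crossings via band moves), which is the technical heart of the proposition, and your proposal contains no substitute for it.

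For comparison, the paper embeds the genus-$2$ handlebody $H$ in $S^3$ so that $P\sqcup -P$ maps to the split link $P(K)\sqcup -P(J)$ and $C$ maps to $K\#-J$, pushes the planar cobordism of Lemma~\ref{lemma:planar-cobordism} into $S^3\times[0,1]$, attaches a $0$-framed $2$-handle along $K\#-J$ so that the surfaces cap off to annuli joining $P(K)$ to $-P(J)$, and then performs $\Lambda$-surgery along those annuli; homologically $Z$ is $D^2\times S^2$ surgered along $k$ annuli, from which (1)--(3) follow. Your homological observations (companion-independence of the linking matrix giving the canonical bijection, the single $\Z$-summand carried by the capped Seifert surface with square zero) are consistent with this and would likely go through, but they cannot be verified until $Z$ itself is correctly constructed.
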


For an ordered, oriented link $P$ in $S^1\times D^2$, let $-P$ be the link in $S^1\times D^2$ which is the mirror of $P$ with the reversed string orientations. Denote by $H$ the genus 2 handlebody. Regard $H$ as the boundary connected sum of two copies of $S^1\times D^2$. Consider two links $P\sqcup -P$ and $C$ in $H$ in Figure~\ref{figure:links}. We need a lemma.

 \begin{figure}[htb!]
 \centering
  \includegraphics[scale=1]{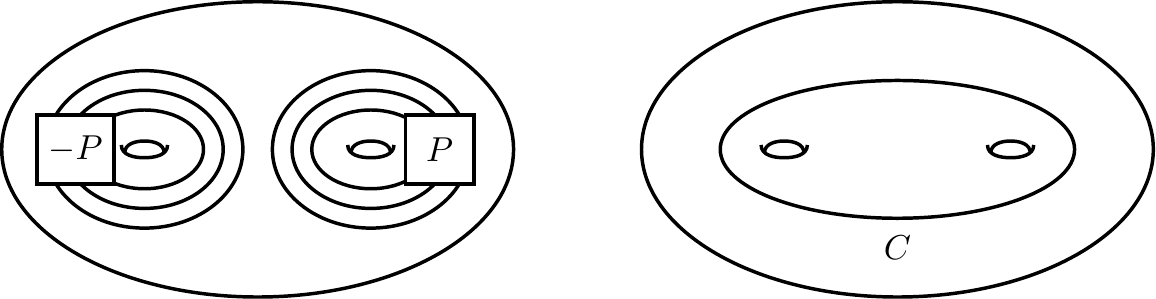}
  \caption{The links $P\sqcup - P$ and $C$ in $H$.}
  \label{figure:links}
  \end{figure}
\begin{lemma}\label{lemma:planar-cobordism} For each ordered, oriented $k$-component link $P$ in $S^1\times D^2$, there exist compact connected surfaces $S_1,\ldots,S_k$ of genus 0 with $\partial S_i\subset H\times \{0,1\}$ which are properly disjointly embedded in $H\times [0,1]$ such that the following hold\textup{:}
\begin{enumerate}

\item For each $i$, $\d S_i\cap H\times \{0\}$ is either the empty set or the union of finitely many parallel copies of $C$ endowed with a certain orientation.
\item For each $i$, $\d S_i\cap H\times \{1\}$ is the union of the $i$-th components of $P$ and $-P$. In particular, $(\bigsqcup_{i=1}^k \d S_i)\cap H\times \{1\}=P\sqcup -P$.
\end{enumerate} 
\end{lemma}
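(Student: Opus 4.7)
The plan is to build each surface $S_i$ piece by piece via an explicit movie in $H \times [0,1]$, working inside mutually disjoint tubular neighborhoods of the pairs $P_i \cup (-P_i)$. For a single component $P_i$ with winding number $w_i$ in the first solid torus summand, I would begin at the top level $t=1$ by isotoping $P_i$ to the closure of a braid $\beta_i$ on $|w_i|$ strands parallel to the core of its solid torus (choosing a balanced braid representative with strands running in both directions when $w_i=0$). Because $-P_i$ was defined as the mirror of $P_i$ with reversed string orientations, this forces $-P_i$ to sit inside the second solid torus summand as the closure of the corresponding mirror-inverse braid. These initial isotopies sweep out product annular cobordisms in $H \times [0,1]$, disjoint for different indices $i$.

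Next, I would perform a sequence of saddle moves, one for each crossing of $\beta_i$, pairing it with the matching mirror crossing in the second braid. Each such pair of crossings is resolved by a single oriented band whose core is an arc crossing through the $1$-handle of $H$ at the relevant level $t$, smoothing both crossings compatibly. This is precisely the standard oriented cobordism realizing the cancellation of $\beta_i$ with $\beta_i^{-1}$. After all crossing pairs have been smoothed, the link at the resulting level is $|w_i|$ trivial parallel strands in the first solid torus joined through the $1$-handle to $|w_i|$ trivial parallel strands in the second, which can be arranged to be exactly $|w_i|$ parallel pushoffs of $C$ with a consistent orientation. A final vertical product cobordism down to $t=0$ completes $S_i$.

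To verify the required properties, one counts saddles: the Euler characteristic of $S_i$ equals the negative of the number of saddle moves used, and the total number of boundary circles works out so that $S_i$ is a planar surface with $|w_i|+2$ boundary components. Connectedness is ensured by ordering the saddle moves as a standard cancellation chain (for example, innermost crossings first), so that each saddle merges two existing boundary components rather than splitting one. When $w_i=0$, the balanced braid cancels completely against its mirror inverse to leave a pair of unknotted circles cobounding an annulus through the $1$-handle, and no copies of $C$ are required. Carrying out the construction for the different $i$ inside mutually disjoint tubular neighborhoods of the pairs $P_i\cup (-P_i)$ makes the $S_i$ pairwise disjoint, while keeping all boundary at $H\times\{0,1\}$.

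The main obstacle I foresee is combinatorial rather than conceptual: one must check that the specified ordering of the saddle moves keeps each $S_i$ connected and of genus exactly zero at every intermediate stage, rather than either disconnecting the surface or creating handles. This reduces to choosing the connecting arcs so that the merging pattern on boundary components forms a tree, which is the standard geometric realization of the identity $\beta\beta^{-1}=1$ in the braid group by a union of disks, and can be verified directly from the braid picture.
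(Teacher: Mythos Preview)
Your proposal has a genuine gap at its very first step. You claim each component $P_i$ can be isotoped inside $S^1\times D^2$ to the closure of a braid on $|w_i|$ strands, but this is false in general. A closed braid on $n$ strands in the solid torus (all strands winding once around the core in the same direction) has winding number exactly $\pm n$, so if a closed-braid representative exists the strand count is forced to equal $|w_i|$; however, not every knot of a given winding number admits such a representative. For instance, take the core circle with a local trefoil tied into it: it has winding number $1$, so it would have to be a closed $1$-braid and hence isotopic in the solid torus to the core itself---impossible, since the two curves have different knot types in $S^3$. Your parenthetical fix for $w_i=0$ (``strands running in both directions'') does not cover this case, and once strands run both ways there is no braid word $\beta_i$ to which the identity $\beta_i\beta_i^{-1}=1$ and its ``standard'' disk cobordism apply.

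Even granting a braid position, the saddle description is problematic: a single oriented band is a local modification and cannot by itself smooth two crossings sitting in different solid tori. At best the band move renders both crossings removable by a subsequent isotopy, but that requires the band to run along a specific sub-arc of the link, which you have not specified. The paper's proof avoids all of this by working with an arbitrary diagram of $P\sqcup -P$ in $H$ rather than any braid normal form: it first bands each $P_i$ to $-P_i$ across the mirror disk $M$, then inductively chooses the crossing $c$ of the resulting $\iota$-symmetric knot that is closest to $M$, so that the sub-arc of the knot from $c$ to $\iota(c)$ is crossing-free; the band along that arc visibly removes both crossings and increases the number of components by one. Because every band in the induction is a fission, the trace is a tree of pairs of pants and is automatically connected of genus zero, with no braid combinatorics needed.
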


Before proving this lemma, we define a couple of terms: a \emph{diagram} of a link $L$ in $H$ is the projection of $L$ into the boundary connected sum of two annuli, a twice punctured disk, having transversal double singularities equipped with under and over passing information. Here, $H$ is assumed to be embedded in $\mathbb{R}^3$ in a standard way, whose projection is a twice punctured disk. The singularities of the diagram are called \emph{crossings}.

\begin{proof}[Proof of Lemma~\ref{lemma:planar-cobordism}]
Let $P_i$ be the $i$-th component of $P$ for $i=1,\ldots, k$. Let $M$ be the disk, where the boundary connected sum of $H=S^1\times D^2\, \natural\, S^1\times D^2$ takes place. One may consider $M$ as the mirror reflecting $P$ onto $-P$ in $H$. In other words, there is an involution $\iota$ on $H$ fixing each point of $M$ and $\iota(P_i)=-P_i$ for $i=1,\ldots,k$.

For each $i=1,\ldots,k$, we can choose a simple path $\gamma_i$ from  a point of $P_i$ to a point of $-P_i$ so that $\iota(\gamma_i)=\gamma_i$, $\gamma_i$ meets each of $M$, $P_i$, and $-P_i$ in exactly one point, but $\gamma_i$ does not meet $P_j$ or $\gamma_j$ for any $j\ne i$. We take band sums of $P_i$ with $-P_i$ along $\gamma_i$ for all~$i$. This procedure gives us a link $L$ in $H$ satisfying the following:
\begin{itemize}
 \item[(i)] Each component is of the form $K\# \iota(K)$ for some knot $K$ in $S^1\times D^2$.
 \item[(ii)] Each component intersects the mirror $M$ in exactly two points.
\end{itemize}
It also produces properly, disjointly embedded, compact connected surfaces $S'_1,\ldots,S'_k$ of genus 0 in $S^3\times [\frac{1}{2},1]$ such that $\partial S_i'\cap S^3\times \{1\}=P_i\sqcup -P_i$ and $\partial S_i'\cap S^3\times \{\frac{1}{2}\}=P_i\# -P_i$. Note that, if a link $L$ in $H$ satisfies conditions (i) and (ii), then, for every crossing $c$ of $L$, the overpassing arcs of $c$ and $\iota(c)$ belong to the same component of $L$ and the underpassing arcs of $c$ and $\iota(c)$ belong to the same component of $L$.

\begin{figure}[htb!]
\includegraphics{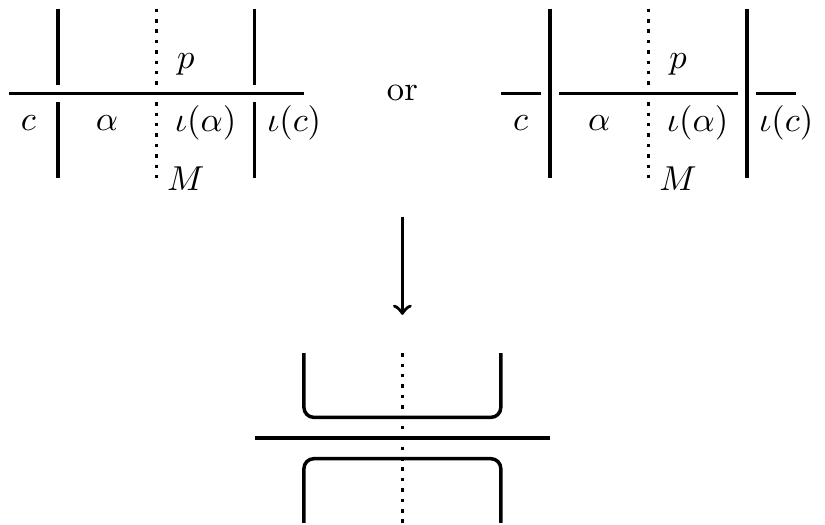}

 \caption{Band move.}
 \label{fig:band move}
\end{figure}
Beginning with a diagram $D$ of such a link $L$, we shall find a sequence of band sums, each of which decreases the number of crossings and increases the number of components. So, the final link consists of unlinked unknots and unlinked parallel copies of $C$. The traces of the band sums and the disks bounding the unlinked unknots give disjointly embedded, compact connected surfaces $S_1'',\ldots,S_k''$ of genus 0 in $S^3\times [0,\frac{1}{2}]$ such that $\partial S_i''\cap S^3\times \{\frac{1}{2}\}=P_i\# -P_i$ and $\partial S_i''\cap S^3\times \{0\}$ is either the empty set or the union of finitely many parallel copies of $C$. By taking $S_i$ to be the union of $S_i'$ and $S_i''$, this will prove the lemma.

We use induction on the number of crossings in $D$. Suppose $D$ has no crossing. Then, each component is either parallel to $C$ or unknotted in $H$. Use the innermost circle argument to cap off the unknotted components, and we are done for this case. Now assume that $D$ has a crossing. Let $K$ be a knot in $S^1\times D^2$ for which $K\#\iota(K)$ is a component of $D$ having a crossing. As stated above, the component meets $M$ at a point~$p$. Let $c$ be the crossing in $K$ from which to $p$ the part of $K$ has no crossings. Denote the part by $\alpha$. Then $\alpha\cup \iota(\alpha)$ is a path from $c$ to $\iota(c)$, which is a part of $K$ with no crossings. See Figure~\ref{fig:band move}. Along this path $\alpha\cup \iota(\alpha)$, we take a band move between the two arcs that transversally cross $\alpha\cup \iota(\alpha)$ at $c$ and $\iota(c)$. It is obvious that this band sum reduces the number of crossings and increases the number of components. The resulting link has two less crossings and still satisfies the above two conditions (i) and (ii). Now, inductively we can find the desired sequence as we claimed above.
\end{proof}

We are ready to prove Proposition~\ref{proposition:semidefinite}.
\begin{proof}[Proof of Proposition~\ref{proposition:semidefinite}] For given two knots $K$ and $J$ in $S^3$, Figure~\ref{figure:embedding} describes an embedding of $H\to S^3$. Let $f\colon H\times [0,1]\to S^3\times [0,1]$ denote the product of the embedding $H\to S^3$ and the identity on $[0,1]$.  For a given $k$-component link $P$ in $S^1\times D^2$, by Lemma~\ref{lemma:planar-cobordism}, there are connected, disjointly embedded surfaces $S_1,\ldots,S_k$ of genus 0 in $H\times [0,1]$ such that $\d S_i \cap H\times \{1\}$ is the union of the $i$-th component of $P$ and $-P$ for all $i$, and $\d S_i\cap H\times \{0\}$ is either empty or the union of finitely many parallel copies of $C$ endowed with a certain orientation. By the definition of $f$, $f(S_1),\ldots,f(S_k)$ are disjointly embedded surfaces of genus 0 in $S^3\times [0,1]$ such that $(\bigsqcup_{i=1}^k f(S_i))\cap S^3\times \{1\}$ is the split union $(P(K)\sqcup -P(J))\times \{1\} $ and $(\bigsqcup_{i=1}^k f(S_i))\cap S^3\times \{0 \}$ is the union of finitely many parallel copies of $K\# -J$. 

\begin{figure}[htb!]
\includegraphics{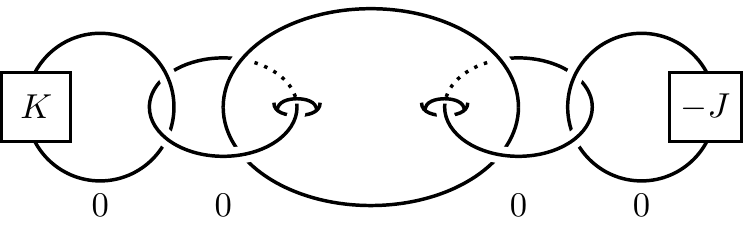}

 \caption{An embedding of $H$ in $S^3$.}
 \label{figure:embedding}
\end{figure}
Let $W$ be the 4-manifold obtained from $S^3\times [0,1]$ by attaching a 0-framed 2-handle along $(K\# -J)\times \{0\}$. Note that $P(K)\times \{1\}$ and $-P(J)\times \{1\}$ cobound disjointly embedded annuli $A_1,\ldots,A_k$ in $W$. (To obtain $A_i$, cap off $f(S_i)$ by adding finitely many parallel copies of the 2-handle core disk.) Let $Z_0$ be the result of $\Lambda=(n_1,\ldots,n_k)$-surgery on $W$ along the annuli $A_1,\ldots,A_k$. Note that $Z_0$ has two (oriented) boundary components $S^3_\Lambda(P(K))\# -S^3_{\Lambda}(P(J))$ and $S^3_0(J\#-K)$. The desired 4-manifold $Z$ is obtained from $Z_0$ by deleting a neighborhood of an arc joining the boundary components of $Z_0$. Homologically, $Z$ is obtained from $D^2\times S^2$ by doing $\Lambda=(n_1,\ldots,n_k)$-surgeries on $k$ annuli, so the desired properties of $Z$ follow.
\end{proof}
By the Akbulut-Kirby method \cite{Akbulut-Kirby:1979-1}, we obtain surgery diagrams for the $r$-fold cyclic branched covers $\Sigma_r(R_m(J,D))$ and $\Sigma_r(R_m(U,D))$ of $S^3$ along $R_m(J,D)$ and $R_m(U,D)$ from the genus 1 Seifert surface in Figure~\ref{figure:seed-knot}. The surgery link of $\Sigma_r(R_m(J,D))$ is obtained from that of $\Sigma_r(R_m(U,D))$ by doing iterated satellite operations (actually $r$ times) with the fixed companion knot $J$. As in Theorem~\ref{theorem:d-invariant-satellite}, we can identify the Spin$^c$ structures of $\Sigma_r(R_m(J,D))$ and those of $\Sigma_r(R_m(U,D))$. We remark that $\Sigma_r(R_m(J,D))$ and $\Sigma_r(R_m(U,D))$ are $\Z_2$-homology 3-spheres since 
\[H_1(\Sigma_r(R_m(J,D)))\cong H_1(\Sigma_r(R_m(U,D)))\cong \Z_N\oplus \Z_N\]
 where $N=(m+1)^r-m^r$ (see \cite[Proposition~2]{Gilmer-Livingston:1992-2}).

\begin{theorem}\label{theorem:d-invariant-practical}If $J$ is $\nu^+$-equivalent to the unknot $U$, then 
\[d(\Sigma_r(R_m(J,D)),\mathfrak{s})=d(\Sigma_r(R_m(U,D)),\mathfrak{s}).\]
\end{theorem}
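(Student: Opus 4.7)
The plan is to reduce the statement to a single application of Theorem~\ref{theorem:d-invariant-satellite}. First, applying the Akbulut-Kirby method to the genus one Seifert surface of $R_m(\,\cdot\,, D)$ shown in Figure~\ref{figure:seed-knot}, we produce explicit integer surgery presentations for $\Sigma_r(R_m(J,D))$ and $\Sigma_r(R_m(U,D))$ in $S^3$. These two surgery diagrams are identical except on $r$ components which live inside $r$ disjoint solid tori coming from the preimages of a tubular neighborhood of the $J$-band of the Seifert surface; on these components, one diagram has copies of $J$ while the other has copies of the unknot $U$.

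The next step is to repackage this as a single satellite construction: we choose a pattern link $P \subset S^1 \times D^2$ and a surgery vector $\Lambda$ such that
\[
S^3_{\Lambda}(P(J)) \cong \Sigma_r(R_m(J,D))
\qquad\text{and}\qquad
S^3_{\Lambda}(P(U)) \cong \Sigma_r(R_m(U,D)),
\]
by taking the $r$ variable components to be $r$ parallel pushoffs of the core of $S^1 \times D^2$ inside $P$, with the remaining $D$-components and framings encoded in $P$ and $\Lambda$. Since linking numbers in a satellite $P(K)$ do not depend on the companion knot $K$, the first homologies of the two resulting surgery manifolds agree; the discussion preceding the theorem records that both are $\Z_2$-homology spheres isomorphic to $\Z_N \oplus \Z_N$ with $N = (m+1)^r - m^r$, so the hypotheses of Theorem~\ref{theorem:d-invariant-satellite} are met.

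Applying Theorem~\ref{theorem:d-invariant-satellite} to the $\nu^+$-equivalent pair $(J, U)$ then yields
\[
d\bigl(S^3_{\Lambda}(P(J)), \mathfrak{t}_J\bigr) = d\bigl(S^3_{\Lambda}(P(U)), \mathfrak{t}_U\bigr)
\]
whenever $\mathfrak{t}_J$ and $\mathfrak{t}_U$ correspond under the canonical bijection $H^2(S^3_{\Lambda}(P(J))) \to H^2(S^3_{\Lambda}(P(U)))$. Under the identifications produced in the first step, the paragraph just before the theorem already points out that this canonical bijection sends $\mathfrak{s}$ on $\Sigma_r(R_m(J,D))$ to $\mathfrak{s}$ on $\Sigma_r(R_m(U,D))$, which closes the argument.

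The main technical obstacle is the first step: writing the Akbulut-Kirby surgery diagrams explicitly enough to exhibit them as $P(J)$ and $P(U)$ for a common pattern $P$. An equivalent and perhaps cleaner alternative, closer in spirit to the phrase ``iterated satellite operations (actually $r$ times) with the fixed companion knot $J$'' used earlier, is to apply Theorem~\ref{theorem:d-invariant-satellite} $r$ separate times, switching one $U$-component to a $J$-component at each stage; since the linking matrix is preserved under such a replacement, every intermediate surgery manifold remains a $\Z_2$-homology sphere and the hypotheses of the theorem stay in force throughout. Either way, the $\nu^+$-equivalence hypothesis on $J$ is absorbed precisely into the hypothesis of Theorem~\ref{theorem:d-invariant-satellite}, and the remainder is bookkeeping about Spin$^c$ structures.
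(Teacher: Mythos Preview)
Your alternative at the end—applying Theorem~\ref{theorem:d-invariant-satellite} $r$ separate times, swapping one $U$-component for a $J$-component at each stage—is exactly the paper's proof, and it is correct. The remark that the linking matrix is unchanged by each swap, so every intermediate surgery manifold has first homology $\Z_N\oplus\Z_N$ with $N=(m+1)^r-m^r$ odd and hence is a $\Z_2$-homology sphere, is precisely what keeps the hypotheses of Theorem~\ref{theorem:d-invariant-satellite} in force throughout the iteration.

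Your primary approach, however, does not work as written. If the $r$ variable components are taken to be parallel pushoffs of the core of a single $S^1\times D^2$ and the $D$-components are also placed inside the pattern $P$, then in $P(J)$ those $D$-components sit inside $\nu(J)\subset S^3$ and become satellites of $J$; they are no longer copies of $D$, so $S^3_\Lambda(P(J))$ is not $\Sigma_r(R_m(J,D))$. More fundamentally, the $r$ $J$-band lifts in the Akbulut--Kirby diagram live in $r$ \emph{disjoint} solid tori (the preimages of a neighborhood of $\alpha_J$), interleaved with the $D$-band lifts; forcing them into a single tube as parallel pushoffs would require the $D$-components to link each $J$-component identically, which the Seifert-form linking pattern forbids. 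Matching knot types and pairwise linking numbers is not enough to make the links isotopic. This is why the paragraph before the theorem speaks of ``iterated satellite operations (actually $r$ times)'' and not a single one: the iterated version is not merely cleaner, it is the one that actually goes through.
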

\begin{proof} From the discussion given in the paragraph preceding Theorem~\ref{theorem:d-invariant-practical}, we obtain the conclusion by applying Theorem~\ref{theorem:d-invariant-satellite} $n$ times.
\end{proof}

\section{Proof of Theorem~\ref{theorem:main}}\label{section:proof of Theorem A}In this section we prove Theorem~\ref{theorem:main}. The proof of Theorem~\ref{theorem:main} is essentially identical with the proof of \cite[Theorem~D]{Cha:2019-1} with some modifications. Below we give a list of the key modifications for applying the proof of  \cite[Theorem~D]{Cha:2019-1} to the case $n=1$, which gives the proof of  Theorem~\ref{theorem:main}: 
\begin{enumerate}
\item Use different $J_i$, as defined in the introduction of this paper, for the construction of $R_m(J_i, D)$.
\item For the computation of the von Neumann $\rho$-invariants in \cite[Section~3]{Cha:2019-1}, in \cite[Definition~3.1]{Cha:2019-1} take $n=1$ and replace the coefficients $\Z_{p_1}$  by $\Q$. This is due to our choice of $J_i$.
\item For the computation of correction terms $d(\Sigma_r(R_m(J_i,D)),\mathfrak{s})$ in \cite[Section~4]{Cha:2019-1}, use Theorem~\ref{theorem:d-invariant-practical} in place of \cite[Lemma~8.2]{Cochran-Harvey-Horn:2012-1} and \cite[Lemma~5.1]{Cha-Kim:2017-1}, which is again due to our choice of $J_i$. This is our key technical contribution for the proof of Theorem~\ref{theorem:main}.
\end{enumerate}

The above key modifications with some minor changes for the proof of Theorem~D in \cite{Cha:2019-1} will easily produce the proof of Theorem~\ref{theorem:main}. 

Nonetheless, to clarify the proof, we give a more detailed sketch of the proof of Theorem~\ref{theorem:main} below following the arguments in the proof of Theorem~D in \cite{Cha:2019-1}. In particular, we do not include some arguments in the proof of \cite[Theorem~D]{Cha:2019-1} which are not necessary for our proof but were needed for the proof of  \cite[Theorem~D]{Cha:2019-1} to take care of far more general cases $n>1$. 
\bigskip

For the rest of Section~\ref{section:proof of Theorem A}, we give a sketch of the proof of Theorem~\ref{theorem:main}. Fix an odd positive integer $m$ and let $R:=R_m$. For each $i\ge 1$ let $K_i:=R(J_i,D)$ where $J_i=D_-(k_i T_{2,3}, 2k_i)$ as defined in the introduction.

The following lemma gives the key properties of $J_i$. For a knot $K$, we denote by $\sigma_K$ the Levine-Tristram signature function of $K$ defined on $S^1\subset \C$.

\begin{lemma}\label{lemma:J_i}
	The knots $J_i$ satisfy the following.
	\begin{enumerate}
		\item Each $J_i$ is $0$-negative.
		\item For $i\ge 1$, $\int_{S^1}\sigma_{J_i}(\omega)\,\,d\omega$ are linearly independent over $\Q$.
		\item Each $J_i$ is $\nu^+$-equivalent to the unknot.
	\end{enumerate}
\end{lemma}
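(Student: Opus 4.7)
Each of the three parts will be proved separately.

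For (1), the plan is to construct an explicit 0-negaton for each $J_i$. Since $k_iT_{2,3}$ has smooth $4$-genus $k_i$ and bounds a genus-zero immersed surface in $B^4$ with exactly $k_i$ positive double points, one can push the genus-one Seifert surface of $J_i=D_-(k_iT_{2,3},2k_i)$ into $B^4$, replace the longitudinal band by this immersed surface for $k_iT_{2,3}$, and resolve the single negative clasp. The arithmetic identity $2k_i=2g_4(k_iT_{2,3})$ ensures that the intersection form of the resulting $4$-manifold satisfies the negative Euler class condition of Definition~\ref{definition:n-bipolar}. This is essentially the construction already used in \cite{Cochran-Horn:2012-1, Cha-Kim:2017-1, Cha:2019-1}.

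For (2), I would apply Litherland's satellite signature formula together with a direct computation. Since the Whitehead pattern has winding zero, $\sigma_{J_i}(\omega)=\sigma_{D_-(U,2k_i)}(\omega)$ for $\omega\neq 1$, and a Seifert-matrix calculation shows that this signature equals $-2$ outside and $0$ inside an arc of angular half-width $\phi_i:=\arccos(1-1/(4k_i))$ centered at $\omega=1$, so
\[
\int_{S^1}\sigma_{J_i}(\omega)\,d\omega = -2 + \tfrac{2}{\pi}\phi_i.
\]
Using $\cos\phi_i = 1 - 2/(p_i-1)^2$ with distinct primes $p_i\equiv 1\pmod 4$, the numbers $e^{i\phi_i}$ are algebraic numbers on the unit circle lying in distinct quadratic extensions of $\mathbb{Q}(i)$. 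A Baker-type linear independence theorem for logarithms of algebraic numbers then gives the $\mathbb{Q}$-linear independence of $\{1\}\cup\{\phi_i/\pi : i\geq 1\}$, and hence of the integrals.

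For (3), Hedden's formula for $\tau$ of twisted Whitehead doubles at the critical threshold $t = 2\tau(k_iT_{2,3}) = 2k_i$ yields $\tau(J_i)=0$. The stronger claim that $J_i$ is $\nu^+$-equivalent to the unknot — equivalently, by Remark~\ref{remark:nu+-equivalence}, that $CFK^\infty(J_i)$ is stably filtered chain homotopy equivalent to $CFK^\infty(U)$ — follows by adapting Proposition~6.1 of \cite{Hedden-Kim-Livingston:2016-1} (and subsequent results of Hom and Sato) to the negatively-clasped twisted Whitehead double at its critical twist. Transferring this stable equivalence from the untwisted positive case to the critically twisted negative case is the main obstacle; the number-theoretic input in (2) is non-trivial but standard, and (1) is a routine geometric construction.
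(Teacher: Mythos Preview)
Your approach diverges from the paper's in all three parts, and parts (2) and (3) contain real gaps.

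For (1), the paper's argument is a one-liner: $J_i=D_-(k_iT_{2,3},2k_i)$ becomes the unknot after changing its single negative clasp crossing, so \cite[Proposition~3.1]{Cochran-Harvey-Horn:2012-1} applies directly. Your immersed-surface construction is roundabout and, as written, produces \emph{positive} double points (from the immersed disk for $k_iT_{2,3}$), which does not establish $0$-negativity; you would still need to explain how the positive double points combine with the $2k_i$ framing and the negative clasp to yield a genuine $0$-negaton.

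For (2), invoking a Baker-type theorem is a misfire. Baker's theorem takes $\Q$-linear independence of logarithms as a \emph{hypothesis} and upgrades it to $\bar\Q$-linear independence; it does not produce $\Q$-linear independence from field-theoretic data. What you actually need is that the algebraic numbers $e^{i\phi_j}$ (which are roots of $\Delta_{J_j}$) are multiplicatively independent modulo roots of unity. That is a purely algebraic statement, and it is exactly what the proof of \cite[Proposition~2.6]{Cochran-Orr-Teichner:2004-1} establishes for this family: the specific choice $k_j=\tfrac18(p_j-1)^2$ with $p_j\equiv 1\pmod 4$ is engineered so that $e^{i\phi_j}$ lies in $\Q(\sqrt{-p_j(p_j-2)})$, and these imaginary quadratic fields are linearly disjoint because distinct primes $p_j$ appear in their discriminants. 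Your observation about ``distinct quadratic extensions'' is the right starting point, but the continuation is Galois theory, not transcendence theory.

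For (3), you correctly obtain $\tau(J_i)=0$ from Hedden's formula, but then propose to ``adapt'' the $CFK^\infty$ computation of \cite[Proposition~6.1]{Hedden-Kim-Livingston:2016-1} to the negatively-clasped, critically-twisted case, and you flag this as the main obstacle without carrying it out. The paper bypasses this entirely by citing Sato's theorem \cite[Theorem~1.2]{Sato:2019-1}: \emph{every} genus-one knot with $\tau=0$ is $\nu^+$-equivalent to the unknot. Since $J_i$ has genus one and $\tau(J_i)=0$, the conclusion is immediate. You are missing this reference, and without it (or a full $CFK^\infty$ computation) your argument for (3) is incomplete.
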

\begin{proof}Since $J_i$ can be unknotted by changing a negative crossing, 
	Property~(1) follows from \cite[Proposition~3.1]{Cochran-Harvey-Horn:2012-1}.  Since $
	\Delta_{J_i}=2k_it-(4k_i-1)+2k_it^{-1}$, Property~(2) follows from the proof of Proposition~2.6 in \cite[p.\ 121]{Cochran-Orr-Teichner:2004-1}. For the concordance invariant $\tau$ of Ozsv\'{a}th and Szab\'{o} \cite{Ozsvath-Szabo:2003-1}, Sato \cite[Theorem~1.2]{Sato:2019-1} showed that every genus 1 knot with $\tau=0$ is $\nu^+$-equivalent to the unknot. So, it is enough to show $\tau(J_i)=0$ to see  Property~(3). Note that $-J_i=D_+(k_iT_{2,{-3}}, {-2}k_i)$. Since $\tau(k_iT_{2,{-3}})={-2}k_i$, we have $\tau(-J_i)=0$ by \cite[Theorem~1.5]{Hedden:2007-1}, and hence $\tau(J_i)=-\tau(-J_i)=0$.
\end{proof}

Our knots $K_i$ are topologically slice and 1-bipolar, that is, $K_i\in \cT_1$ for all $i\ge 1$; since $J_i$ is 0-negative (and $D$ is 0-positive), this follows from \cite[Lemma~2.3]{Cha-Kim:2017-1}. 

Finally, we show that  if $K$ is a nontrivial linear combination of $K_i$, then $K$ is not concordant to any knot with Alexander polynomial coprime to $\lambda_m$, modulo $\cT_2$. Let $L$ be a knot with Alexander polynomial coprime to $\lambda_m$. For integers $a_i$, let $K:= (\#_{i=1}^ra_iK_i)\# L$. By renumbering indices and changing the orientation if necessary, we may assume that each $a_i$ is nonzero and $a_1>0$. Now it suffices to show that $K$ is not 2-bipolar. 

Suppose $K$ is 2-bipolar. Then in \cite[Subsection~2.3]{Cha-Kim:2017-1}, a 1-negaton, denoted by  $X^-$, with boundary $M(K_1)$ is constructed such that  
for
\[
P:=\Ker\{i_*\colon H_1(M(K_1);\Q[t^{\pm 1}])\to H_1(X^-;\Q[t^{\pm 1}])\}
\] 
where $i_*$ is induced from the inclusion, either $P=\langle \alpha_J\rangle$ or $P=\langle \alpha_D\rangle$. 

\bigskip

\noindent{\bf Case 1: $P=\langle \alpha_D \rangle$.} In this case we follow the arguments in \cite[Section~3]{Cha:2019-1}; the 4-manifold $X^-$ is modified  to a new 4-manifold $X^0$, keeping the boundary, defined to be
\[
X^0:= V^0\cup_{\partial_-C} C \cup_{\partial_+C\sm M(K_1)} \left(\left(a_1-1\right)Z_1^0 \sqcup\left(\bigsqcup_{i>1}|a_i|Z_i^0\right)\sqcup Z_L^0\right).
\]
See Figure~\ref{figure:solution}. We refer the reader to Section~3 in \cite{Cha:2019-1} for the notations in the definition of $X^0$.

\begin{remark}
	The above $X^0$ is not the same as $X=X_0$ defined in \cite[Section~3]{Cha:2019-1} for the case $n=1$; it is the same as $X$ \emph{with $E_0$ removed}, which is also the same as $X_1$ for $n=1$ in \cite[Section~3]{Cha:2019-1}. See Figure~5 in\cite{Cha:2019-1}. 
\end{remark}

\begin{figure}
\includegraphics[scale=0.8]{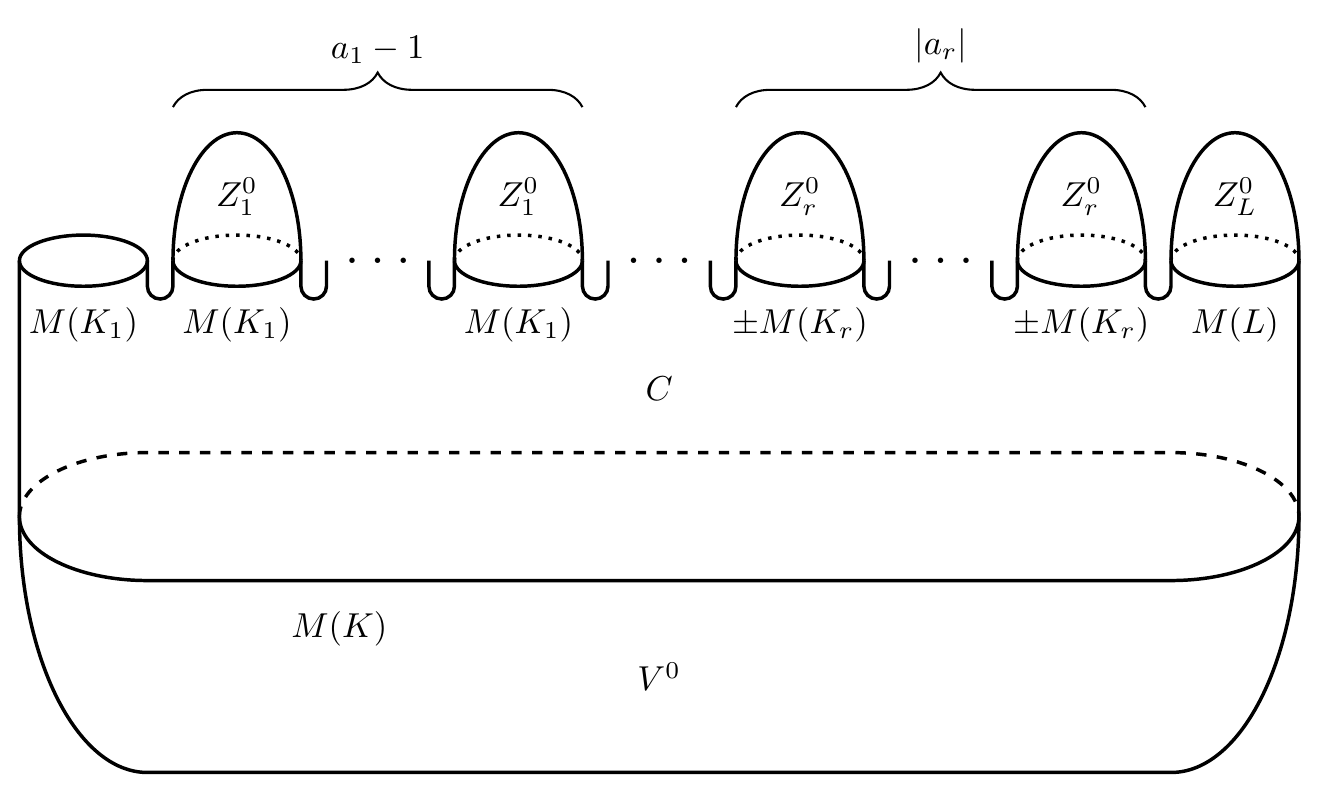}
\caption{The 4-manifold $X^0$.}\label{figure:solution}
\end{figure}

In \cite[Section~3]{Cha:2019-1} it is shown that $\pi_1(X^0)\cong \pi_1(X^-)$ and $H_1(X^0;\Q[t^{\pm 1}])\cong H_1(X^-;\Q[t^{\pm 1}])$. It is also shown that
\[
P=\Ker\{i_*\colon H_1(M(K_1);\Q[t^{\pm 1}])\to H_1(X^0;\Q[t^{\pm 1}])\},
\]
which is $\langle \alpha_D\rangle$  by our hypothesis. 

We will use $X^0$ and its subspaces to compute the von Neumann $\rho$-invariant of $M(K_1)$ defined in \cite{Cheeger-Gromov:1985-1}, and for more details of the von Neumann $\rho$-invariant we refer the reader to \cite[Subsection~3.2]{Cha:2019-1}. We will use the same notations as in \cite{Cha:2019-1}; suppose $M$ is a closed oriented 3-manifold and $W$ is a 4-manifold with $\partial W=M$. Suppose for a countable discrete group $\Gamma$ there is a homomorphism $\phi\colon \pi_1(M) \to \Gamma$ which extends to $\pi_1(W)$. Then, the von Neumann $\rho$-invariant of $M$ associated to $\phi$ is 
\[
\rho(M,\phi)=\osigmat_\Gamma(W):=\sign^{(2)}_\Gamma(W)-\sign(W).
\]

For our proof, we choose a group $\Gamma$ and a representation $\phi\colon \pi_1(X)\to \Gamma$ following Definition~3.1 in \cite{Cha:2019-1}, wherein we take $n=1$ and replace $\Z_{p_1}$ by $\Q$. This is one of our key modifications for the proof of \cite[Theorem~D]{Cha:2019-1}. More details follow: let 
\[
\Sigma:=\{f(t)\in \Q[t^{\pm 1}]\,\mid\, f(1)\ne 0,\, \textrm{gcd}(f(t), \lambda_m)=1\}.
\]
Let $\pi:=\pi_1(X^0)$ and $\cP^1\pi:=\pi^{(1)}=[\pi,\pi]$. Let $\cP^2\pi$ be the kernel of the composition
\begin{multline*}
\cP^1\pi\to \cP^1\pi/[\cP^1\pi,\cP^1\pi]\otimes_\Z\Q =H_1(\pi;\Q[t^{\pm 1}])\to H_1(\pi;\Q[t^{\pm 1}]\Sigma^{-1})\\
\to H_1(X;\Q[t^{\pm 1}]\Sigma^{-1}) \to H_1(X;\Q[t^{\pm 1}]\Sigma^{-1})/\Im H_1(Z_L^0;\Q[t^{\pm 1}]\Sigma^{-1}),
\end{multline*}
where $\Im H_1(Z_L^0;\Q[t^{\pm 1}]\Sigma^{-1})$ is the image of $H_1(Z_L^0;\Q[t^{\pm 1}]\Sigma^{-1})$ under the homomorphism induced from the inclusion from $Z_L^0$ to $X^0$.

Now we let $\Gamma:=\pi/\cP^2\pi$ and let $\phi\colon \pi_1(X^0)=\pi\to \Gamma$ be the quotient map. By abuse of notation, we denote by $\phi$ the restrictions of $\phi$ to subspaces of $X^0$.

By Novikov additivity, we have the following equation (compare it with Equation~(3.2) in \cite{Cha:2019-1}).
\begin{equation}\label{equation:signature-of-X}
	\rho(M(K_1),\phi) =  \osigmat_\Gamma(V^0) + \osigmat_\Gamma(C) + \sum_{j=1}^{a_1-1}\osigmat_\Gamma(Z_{1,j}^0) + \sum_{i=2}^r\sum_{j=1}^{|a_i|}\osigmat_\Gamma(Z_{i,j}^0) + \osigmat_\Gamma(Z_L^0),
\end{equation}
where $Z_{i,j}^0$ are copies of $Z_i^0$. We compute each term of the above equation, and this will lead us to a contradiction. 
\bigskip

(1) $\rho(M(K_1),\phi) = \int_{S^1}\sigma_{J_1}(\omega)\,\,d\omega$. 
To show this, we construct an integral 1-solution $Z$ for $M(K_1)$ following the arguments in \cite{Cochran-Orr-Teichner:2004-1} (and also \cite{Cha:2010-01}). The knot $R(U,D)$ where $U$ denotes the unknot is a slice knot whose slice disk is obtained by cutting the band dual to $\alpha_D$. Let $W$ be the exterior of the slice disk in the 4-ball, and hence $\partial W=R(U,D)$. Since $J_1$ is 0-negative, it is also integrally 0-solvable by \cite[Proposition~5.5]{Cochran-Harvey-Horn:2012-1}.  Let $W'$ be an integral 0-solution with $\partial W'=M(J_1)$. By doing surgery along the curves generating $\pi_1(W')^{(1)}$ we may assume that $\pi_1(W')\cong \Z$. Consider $M(J_1)$ as $(S^3\sm \nu(J_1))\cup (S^1\times D^2)$ where $\nu(J_1)$ denotes the open tubular neighborhood of $J_1$. Now let $Z$ be the 4-manifold obtained from $W$ and $W'$ by identifying the tubular neighborhood of the curve $\alpha_J$ in $M(R(U,D))=\partial W$ and $S^1\times D^2\subset M(J_1)=\partial W'$. Then $Z$ is an integral 1-solution with $\partial Z=M(K_1)$.

Let 
\begin{align*}
\Gamma'&:=\left(H_1(X^0;\Q[t^{\pm 1}]\Sigma^{-1})/\Im H_1(Z_L^0;\Q[t^{\pm 1}]\Sigma^{-1})\right)\rtimes \Z\\
		& \cong H_1(X^0\setminus Z_L^0;\Q[t^{\pm 1}]\Sigma^{-1})\rtimes \Z
\end{align*}
where the last isomorphism is given by \cite[Subsection~3.1]{Cha:2019-1}. Here, in the semidirect products $\Z=H_1(X^0)$ acts on the left summands via deck transformations. Due to the definition of $\Gamma$, there is an injective homomorphism $i\colon \Gamma \hookrightarrow \Gamma'$.

We assert that the composition $i\circ\phi\colon \pi_1(M(K_1))\to \Gamma \hookrightarrow \Gamma'$ extends to $\pi_1(Z)$.  Recall that 
\[
\langle \alpha_D \rangle=\Ker\{H_1(M(K_1);\Q[t^{\pm 1}])\to H_1(X^0;\Q[t^{\pm 1}])\}.
\]
Due to the definition of $\Sigma$ and the fact that $\Delta_L$ is coprime to $\Delta_{K_1}$, using Mayer-Vietoris sequences one can easily see that 
\[
\langle \alpha_D \rangle = \Ker\{H_1(M(K_1);\Q[t^{\pm 1}]) \to H_1(X^0\setminus Z_L^0;\Q[t^{\pm 1}]\Sigma^{-1})\}.
\]
Therefore, the map $i\circ \phi \colon \pi_1(M(K_1))\to \Gamma \hookrightarrow \Gamma'$ factors through the  injective map 
\[
\bar{\phi}\colon \left(H_1(M(K_1);\Q[t^{\pm 1}])/\langle\alpha_D \rangle\right)\rtimes \Z\to \Gamma'.
\]

Due to the construction of $W$, one can easily see that 
\[
H_1(W;\Q[t^{\pm 1}])\cong H_1(R;\Q[t^{\pm 1}])/\langle \alpha_D\rangle\cong H_1(M(K_1);\Q[t^{\pm 1}])/\langle \alpha_D \rangle.
\]
Using Mayer-Vietoris sequences, one can see that $H_1(Z;\Q[t^{\pm 1}])\cong H_1(W;\Q[t^{\pm 1}])$, and therefore $H_1(Z;\Q[t^{\pm 1}])\cong H_1(M(K_1);\Q[t^{\pm 1}])/\langle \alpha_D\rangle$. It follows that the map defined to be the composition
\begin{multline*}
\pi_1(Z)\to \pi_1(Z)/\pi_1(Z)^{(2)}\cong H_1(Z;\Z[t^{\pm 1}])\rtimes \Z\\ 
\to H_1(Z;\Q[t^{\pm 1}])\rtimes \Z \cong (H_1(M(K_1);\Q[t^{\pm 1}])/\langle \alpha_D\rangle)\rtimes \Z \xrightarrow{\bar{\phi}} \Gamma'
\end{multline*}
is an extension of $i\circ\phi \colon \pi_1(M(K_1))\to \Gamma'$, which is what we asserted. By abuse of notation we also denote by $\bar{\phi}$ the extension to $\pi_1(Z)$ and its restrictions to subspaces of $Z$.  

By the subgroup property of the $\rho$-invariant (see Property~(2.3) in \cite[p. 108]{Cochran-Orr-Teichner:2004-1}), $\rho(M(K_1),\phi)=\rho(M(K_1),i\circ \phi)$.  Therefore, we have $\rho(M(K_1),\phi) =\osigmat_{\Gamma'}(Z)$, where $\osigmat_{\Gamma'}(Z) =\osigmat_{\Gamma'}(W) + \osigmat_{\Gamma'}(W')$ by Novikov additivity. Since $W$ is a slice disk exterior, $\osigmat_{\Gamma'}(W)=0$ by \cite[Theorem~4.2]{Cochran-Orr-Teichner:2003-1}. Since the curve $\alpha_J$ is of infinite order in $H_1(M(K_1);\Q[t^{\pm 1}])/\langle \alpha_D \rangle$, the subgroup of $\Gamma'$ generated by $\bar{\phi}(\alpha_J)$ is isomorphic to $\Z$. Let $\mu_{J_1}$ be a meridian of $J_1$. By the identification of $W$ and $W'$, we have $\bar{\phi}(\mu_{J_1})=\bar{\phi}(\alpha_J)$. Since $\bar{\phi}$ on $\pi_1(M(J_1))$ factors through $\pi_1(W')\cong \Z$ and $\bar{\phi}(\mu_{J_1})$ is of infinite order in $\Gamma'$, the image of $\pi_1(M(J_1))$ under $\bar{\phi}$ is isomorphic to $\Z$. Therefore by \cite[Proposition~5.13]{Cochran-Orr-Teichner:2003-1} and \cite[Lemma~5.3]{Cochran-Orr-Teichner:2004-1}, we obtain $\osigmat_{\Gamma'}(W')= \rho(M(J_1),\bar{\phi})=\int_{S^1}\sigma_{J_1}(\omega)\,\,d\omega$.

(2) $\osigmat_\Gamma(V^0)=\osigmat_\Gamma(C)=0$. This is proved in \cite[Subsection~3.2]{Cha:2019-1}.

(3) $\osigmat_\Gamma(Z_{1,j}^0)=0$ or $-\int_{S^1}\sigma_{J_i}(\omega)\,\,d\omega$,  and for $i\ge 2$,  $\osigmat_\Gamma(Z_{i,j}^0)=0\mbox{ or } \pm \int_{S^1}\sigma_{J_i}(\omega)\,\,d\omega$. This follows from \cite[Lemma~3.3]{Cha-Kim:2017-1}. (Lemma~3.3 in \cite{Cha-Kim:2017-1} uses $\Z_p$ coefficients, and its proof works with $\Z$ coefficients as well.) Or, one can also prove it using arguments similar to the one in $(1)$ above. 

(4) $\osigmat_\Gamma(Z_L^0)=0$. This is a key ingredient of the proof of \cite[Theorem~D]{Cha:2019-1}, and it is proved in \cite[Subsection~3.2]{Cha:2019-1}. For the reader's conveience, and since we use $\Q$ coefficients in place of $\Z_p$ coefficients which was used in \cite{Cha:2019-1}, we give a brief sketch of the computation. Due to the definition of $\cP^2\pi_1(X^0)$, the map $\phi$ on $\pi_1(Z_L^0)$ maps $\pi_1(Z_L^0)^{(1)}$ trivially into $\Gamma$. Therefore, the map $\phi$ on $\pi_1(Z_L^0)$ factors through the injective map $H_1(Z_L^0)\to \Gamma$, and therefore $\osigmat_\Gamma(Z_L^0) =\int_{S^1}\sigma_L(\omega)\,\,d\omega$ by \cite[Proposition~5.13]{Cochran-Orr-Teichner:2003-1} and \cite[Lemma~5.3]{Cochran-Orr-Teichner:2004-1}. Since $L$ is concordant to the connected sum of $K$ and $-a_iK_i$ and each of $K$ and $K_i$ is 1-bipolar, $L$ is also 1-bipolar. Since $L$ is 1-bipolar, it is algebraically slice by \cite[Corollary~5.7]{Cochran-Harvey-Horn:2012-1}, and hence $\int_{S^1}\sigma_L(\omega)\,\,d\omega=0$.
\bigskip

From Equation~(\ref{equation:signature-of-X}) and the above computations (1)--(4), we conclude that for some $\epsilon_i\in \Z$ where $\epsilon_1>0$, 
\[
\sum_{i=1}^r\left(\epsilon_i\cdot \int_{S^1}\sigma_{J_i}(\omega)\,\,d\omega\right)=0.
\]
But it contradicts that $\int_{S^1}\sigma_{J_i}(\omega)\,\,d\omega$ are linearly independent over $\Z$ (see Lemma~\ref{lemma:J_i}~(2)). 

\bigskip

\noindent{\bf Case 2: $P=\langle \alpha_J \rangle$.} In this case the proof is exactly the same as the proof for Case $P=\langle \alpha_J \rangle$ of Theorem~D in \cite[Section~4]{Cha:2019-1} except that we use Theorem~\ref{theorem:d-invariant-practical}, which is our key technical contribution in this case: let $K_0:=R(U,D)$. Let $\Sigma_r(K_1)$ (resp. $\Sigma_r(K_0)$) be the $r$-fold cyclic cover of $S^3$ branched along $K_1$ (resp. $K_0$). In \cite[Section~4]{Cha:2019-1} it was asserted that  $d(\Sigma_r(K_0), \mathfrak{s}_{\Sigma_r}+k\cdot \hat{x_1})\ge 0$ for all sufficiently large prime $r$ and for all $k\in \Z$  (see Lemma~4.1 in \cite{Cha:2019-1} and the paragraphs preceding Lemma~4.1) due to the fact that $J^1_{n-1}$ is unknotted by changing some positive crossings to negative crossings. 

Note that our $J_1$ cannot be unknotted by changing positive crossings to negative crossings. But since $J_1$ is $\nu^+$-equivalent to the unknot (see Lemma~\ref{lemma:J_i}~(3)), by Theorem~\ref{theorem:d-invariant-practical}, 
\begin{equation}\label{equation:d-invariant}
d(\Sigma_r(K_1), \mathfrak{s}_{\Sigma_r}+k\cdot \hat{x_1})=d(\Sigma_r(K_0), \mathfrak{s}_{\Sigma_r}+k\cdot \hat{x_1})
\end{equation}
for all prime $r$ and for all $k\in \Z$. Since $M(K_1)$ bounds a 1-negaton $X^-$ and 
\[
\langle \alpha_J\rangle =\Ker\{H_1(M(K_0);\Q[t^{\pm 1}])\to H_1(W;\Q[t^{\pm 1}])\},
\]
using the same arguments as in the proof of \cite[Lemma~4.1]{Cha:2019-1}, we obtain 
\[
d(\Sigma_r(K_1), \mathfrak{s}_{\Sigma_r}+k\cdot \hat{x_1})\ge 0
\] 
for all $k\in \Z$ and for all sufficiently large prime $r$. Now by Equation~(\ref{equation:d-invariant}) above, we obtain the desired conclusion of Lemma~4.1 in \cite{Cha:2019-1}. The rest of the proof for this case is exactly identical with the proof given in \cite[Section~4]{Cha:2019-1}, and this completes the sketch of the proof of Theorem~\ref{theorem:main}.

\appendix
\setcounter{secnumdepth}{0}

\section{Appendix: metabolizers for Blanchfield forms and concordance invariants}\label{section:appendix}

\setcounter{secnumdepth}{3}
\setcounter{theorem}{0}
\setcounter{equation}{0}
\def\thesection{A}
\def\thesubsubsection{\S\arabic{subsubsection}}

In Appendix, using metabelian concordance invariants given via metabolizers of the Blanchfield form of a knot, we give an obstruction for a knot $K$ to being concordant to any knot with Alexander polynomial coprime to $\Delta_K$ (see Theorem~\ref{theorem:metabelian-obstruction}). This obstruction is given by combining the results in \cite{Kim:2005-2, Kim-Kim:2008-1, Bao:2015-1, Kim-Kim:2018-1}. 

We review the invariants and notations which will be used in Theorem~\ref{theorem:metabelian-obstruction} below. Recall that $\Z=\langle t\rangle$ acts on $\Q(t)/\Q[t^{\pm 1}]$ via multiplication. This action induces a semidirect product $\Gamma:=(\Q(t)/\Q[t^{\pm 1}])\rtimes \Z$. For a knot $K$, let $B\ell_\Q$ be the rational Blanchfield form 
\[
B\ell_\Q\colon H_1(M(K);\Q[t^{\pm 1}])\times H_1(M(K);\Q[t^{\pm 1}]) \to \Q(t)/\Q[t^{\pm 1}].
\]
A choice of $x\in H_1(M(K);\Q[t^{\pm 1}])$ induces a homomorphism $\phi_x\colon \pi_1(M(K))\to \Gamma$ defined by $\phi_x(y) = (B\ell_\Q(x,y\mu^{-\epsilon(y)}), \epsilon(y))$ where $\epsilon\colon \pi_1(M(K_1)))\to \Z$ is the abelianization and $\mu$ is a meridian of $K$. Then, one can obtain the von Neumann $\rho$-invariant 
$\rho(M(K), \phi_x)$, whose definition can be found in Section~\ref{section:proof of Theorem A}.  (The homomorphism $\phi_x$ was introduced in \cite{Cochran-Orr-Teichner:2003-1}, wherein our $\Gamma$ is denote by $\Gamma_1$ or $\Gamma_1^U$ and called the \emph{rationally universal 1-solvable group}.)

For a rational homology 3-sphere $Y$ and a $\Spinc$ structure $\mathfrak{s}$ on $Y$, we let 
\[
\bar{d}(Y,\mathfrak{s}):=d(Y,\mathfrak{s})-d(Y,\mathfrak{s}_0)
\] 
where $d$ is the correction term invariant of Ozsv\'{a}th and Szab\'{o} \cite{Ozsvath-Szabo:2003-2} and $\mathfrak{s}_0$ denotes the canonical $\Spinc$ structure on $Y$. Let $n$ be a prime power and $\zeta_n$ be the primitive $n$th root of unity.  For $\Sigma_r(K)$, the $r$-fold cyclic cover of $S^3$ branched along a knot $K$, and a character of prime power order $\chi\colon H_1(\Sigma_r(K))\to \Q/\Z$, we denote by $\tau(K,\chi)\in L_0(\Q(\zeta_n))\otimes_\Z\Q$ the Casson-Gordon invariant associated to $\chi$ \cite{Casson-Gordon:1986-1}. 

Let
\[
B\ell_\Z\colon H_1(M(K);\Z[t^{\pm 1}])\times H_1(M(K);\Z[t^{\pm 1}]) \to S^{-1}\Z[t^{\pm 1}]/\Z[t^{\pm 1}]
\] 
be the Blanchfield form where $S:=\{f(t)\in \Z[t^{\pm1 }]\,\mid\, f(1)=1\}$. For a $\Z[t^{\pm 1}]$-module $M$ and $r\ge 1$, the map $\pi^r\colon M\to M/(t^r-1)M$ is the quotient map.

\begin{theorem}\label{theorem:metabelian-obstruction}
	Let $K$ and $J$ be knots with coprime Alexander polynomials. Suppose $K\# J$ is slice. Then, there exists a $\Z[t^{\pm 1}]$-submodule $P_K$ of $H_1(M(K);\Z[t^{\pm 1}])$ which is a metabolizer with respect to the Blanchfield form $B\ell_\Z$ for $K$ such that all of the following hold\textup{:} 
	\begin{enumerate}
		\item $P_K\otimes_\Z\Q$ is a metabolizer with respect to the rational Blanchfield form $B\ell_\Q$ for $K$ and $\rho(M(K),\phi_x)=0$ for all $x\in P_K\otimes_\Z\Q$. 
		\item There exists a set of finitely many primes $S=\{p_1,p_2,\ldots, p_m\}$ such that if $r=p^k$ is a prime power where $p\notin S$, then $\pi^r(P_K)$ is a metabolizer with respect to the linking form $H_1(\Sigma_r(K))\times H_1(\Sigma_r(K))\to \Q/\Z$ and the following hold\textup{:}
		\begin{enumerate}
			\item $\bar{d}(\Sigma_r(K),\mathfrak{s_0}+\hat{c})=0$ for all $c\in \pi^r(P_K)$, and
			\item for every character of prime power order $\chi\colon H_1(\Sigma_r(K))\to \Q/\Z$ which vanishes on $\pi^r(P_K)$, the Casson-Gordon invariant $\tau(K,\chi)$ is constant. Moreover, if $K$ is algebraically slice, then $\tau(K,\chi)$ vanishes.
		\end{enumerate} 
	\end{enumerate}
\end{theorem}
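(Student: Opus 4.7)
The plan is to build the metabolizer $P_K$ from a slice-disk exterior for $K\csum J$ and then extract each conclusion from the appropriate cited reference. Pick a slice disk $D\subset B^4$ for $K\csum J$ and let $V:=B^4\sm\nu(D)$, so that $\partial V=M(K\csum J)$, $H_1(V)\cong\Z$, and $H_2(V)=0$. The classical Casson-Gordon/Kawauchi argument shows that
\[
P:=\Ker\bigl\{H_1(M(K\csum J);\Z[t^{\pm1}])\to H_1(V;\Z[t^{\pm1}])\bigr\}
\]
is a metabolizer for the integral Blanchfield form on $K\csum J$, with the integral (as opposed to merely rational) statement being the content of Kim-Kim 2018. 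Using the standard decomposition $H_1(M(K\csum J);\Z[t^{\pm1}])\cong H_1(M(K);\Z[t^{\pm1}])\oplus H_1(M(J);\Z[t^{\pm1}])$ together with coprimality of the annihilators $\Delta_K$ and $\Delta_J$, every $\Z[t^{\pm1}]$-submodule of the direct sum splits as the direct sum of its projections. Hence $P=P_K\oplus P_J$ where $P_K:=P\cap H_1(M(K);\Z[t^{\pm1}])$, and since the Blanchfield form of $K\csum J$ is block-diagonal with respect to this splitting, $P_K$ is a metabolizer for $B\ell_\Z$ on $K$. Tensoring with $\Q$ yields the rational metabolizer property in~(1).

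For the $\rho$-invariant statement in~(1), I would appeal to the vanishing results of Kim 2005 and Kim-Kim 2008: for $x\in P_K\otimes_\Z\Q$, the representation $\phi_x\colon\pi_1(M(K))\to\Gamma$ extends over a 4-manifold assembled from $V$ (after using the coprime splitting to discard the $J$-summand), and the associated $L^{(2)}$-signature defect vanishes because $x$ lies in the kernel of the inclusion-induced map on Alexander modules. This is exactly the vanishing mechanism for first-order $\rho$-invariants developed in those papers in the presence of a slice-disk-induced rational metabolizer.

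For part~(2), the exceptional prime set $S$ consists of the primes that can obstruct compatibility of metabolizers under $(t^r-1)$-reduction, i.e., primes dividing the orders of the torsion groups appearing in the long exact sequence computing $H_1$ of cyclic branched covers from the Alexander module. For every prime power $r=p^k$ with $p\notin S$, the map $\pi^r$ sends $P_K$ to a metabolizer $\pi^r(P_K)\subset H_1(\Sigma_r(K))$ for the $\Q/\Z$-valued linking form. Part~(2)(a) is Bao 2015 applied directly: the $\Spinc$ structures $\mathfrak{s}_0+\hat c$ with $c\in\pi^r(P_K)$ are precisely those that extend over the $r$-fold branched cover of $V$ along $D$, and Bao's theorem gives $\bar d=0$ for such structures. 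Part~(2)(b) follows from the classical Casson-Gordon signature argument as used in Kim 2005: each character $\chi$ vanishing on $\pi^r(P_K)$ extends over the $r$-fold branched cover of $D$ in $V$, so $\tau(K,\chi)$ equals a fixed signature-defect element of $L_0(\Q(\zeta_n))\otimes_\Z\Q$ independent of $\chi$; when $K$ is in addition algebraically slice, the algebraic concordance invariants kill this constant.

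The main technical obstacle is the integral refinement of every step: while the rational analogues (over $\Q[t^{\pm 1}]$ and after inverting finitely many primes) are essentially classical, controlling the integral Blanchfield form $B\ell_\Z$, securing that $P_K$ really splits off $P_J$ over $\Z[t^{\pm 1}]$, and ensuring compatibility with $(t^r-1)$-reduction for \emph{all} prime powers outside a finite set $S$ is precisely the delicate bookkeeping packaged by Kim-Kim 2018; interfacing its output with Bao 2015 and the Kim/Kim-Kim $\rho$- and Casson-Gordon-vanishing theorems is what the theorem hides.
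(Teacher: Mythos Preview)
Your overall strategy matches the paper's, but there is a genuine gap in the integral splitting step, and it propagates through the rest of the argument.

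You assert that ``coprimality of the annihilators $\Delta_K$ and $\Delta_J$'' forces every $\Z[t^{\pm1}]$-submodule of $H_1(M(K);\Z[t^{\pm1}])\oplus H_1(M(J);\Z[t^{\pm1}])$ to split as the direct sum of its projections. This is false as stated: $\Delta_K$ and $\Delta_J$ are coprime in $\Q[t^{\pm1}]$, but in general they do \emph{not} generate the unit ideal in $\Z[t^{\pm1}]$ (take e.g.\ $2t-1$ and $t-2$), so the usual idempotent-splitting argument is unavailable over $\Z[t^{\pm1}]$. The paper handles this by defining $P_L$ as the kernel of the map into the $\Z$-torsion-free quotient $FH_1(W;\Z[t^{\pm1}])=H_1(W;\Z[t^{\pm1}])/T$, not into $H_1(W;\Z[t^{\pm1}])$ as you do. One then has only $f\Delta_K+g\Delta_J=c$ with $c\in\Z\sm\{0\}$ and $f,g\in\Z[t^{\pm1}]$; from $(x,y)\in P_L$ one gets $(cx,0)\in P_L$, and it is precisely the $\Z$-torsion-freeness of the target that lets one divide by $c$ and conclude $(x,0)\in P_L$. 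Without passing to $FH_1$, neither the integral metabolizer property of $P_L$ (which the paper cites from Friedl, not Kim--Kim 2018) nor the splitting $P_L=P_K\oplus P_J$ is established.

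This same omission blurs your identification of the bad prime set $S$. In the paper, $S$ is exactly the finite set of primes dividing $|T|$, the $\Z$-torsion of $H_1(W;\Z[t^{\pm1}])$; Bao's lemma is used to arrange that for $r=p^k$ with $p\notin S$ these primes avoid $|H_1(\Sigma_r(K))|$, which is what allows one to show $\pi^r(P_L)=\Ker(j)$. Your description of $S$ as ``primes obstructing compatibility under $(t^r-1)$-reduction'' is in the right spirit but does not pin down where the finiteness comes from. Finally, for (2)(a) and (2)(b) note that the slice disk and its branched covers pertain to $K\csum J$, not to $K$ alone: one gets $d(\Sigma_r(K\csum J),\mathfrak{s}_0+\widehat{(c,0)})=0$ and $\tau(K\csum J,\chi\oplus 0)=0$, and the conclusions for $K$ follow by additivity (so $\tau(K,\chi)=-\tau(J,0)$; when $K$ is algebraically slice, $J$ is too, whence $\tau(J,0)=0$). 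Your phrasing suggests the $\Spinc$ structures extend over a branched cover bounding $\Sigma_r(K)$ directly, which is not what happens.
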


The proof of Theorem~\ref{theorem:metabelian-obstruction} is postponed to the end of Appendix. The set of primes $S$ in Theorem~\ref{theorem:metabelian-obstruction} can be taken as the empty set if $K\# J$ bounds a slice disk $D$ in the 4-ball such that, letting $W$ denote the exterior of the slice disk $D$, $H_1(W;\Z[t^{\pm 1}])$ has no $\Z$-torsion elements. In particular, if $K\# J$ is a ribbon knot, then $S$ is the empty set. 

\begin{proposition}\label{proposition:example}Let $R$ be the knot $9_{46}$, which is $R_1$ in Figure~\ref{figure:seed-knot}. Then, the knot $R(D_+(T_{2,3},0)\# -T_{2,3},D_+(T_{2,3},0))$ is topologically slice and not concordant to any knot with Alexander polynomial coprime to $\lambda_1=(t-2)(2t-1)$.
\end{proposition}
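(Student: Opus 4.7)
The argument splits into topological sliceness and non-concordance. For topological sliceness: since $D=D_+(T_{2,3},0)$ has trivial Alexander polynomial, $D$ is topologically slice by Freedman. Push a genus-one Seifert surface $F$ for $R(J,D)$ into $B^4$; on $F$ the dual curve $\alpha_D$ is isotopic in $S^3$ to $D$ and has self-linking zero, so it bounds a locally flat disk $\Delta$ in $B^4$. After a general position adjustment $\Delta$ can be made disjoint from $F$, and ambient surgery on $F$ along $\Delta$ kills the only handle of $F$, converting it into a locally flat slice disk for $R(J,D)$.

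For non-concordance, suppose $R(J,D)\csum -K'$ is slice with $\Delta_{K'}$ coprime to $\lambda_1$, and set $K:=R(J,D)$. Theorem~\ref{theorem:metabelian-obstruction} produces a metabolizer $P_K$ of $B\ell_\Z$ satisfying properties~(1), (2a), (2b). The Alexander module of $K$ coincides with that of $R=R_1$, namely $\Z[t^{\pm 1}]/(t-2)\oplus \Z[t^{\pm 1}]/(2t-1)$; because $(t-2)$ and $(2t-1)$ are coprime and form a Fox-Milnor pair, the only Blanchfield metabolizers are the two rank-one submodules $\langle\alpha_J\rangle$ and $\langle\alpha_D\rangle$, so it suffices to rule out both.

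If $P_K=\langle\alpha_D\rangle$, we use property~(1). The von Neumann $\rho$-invariant $\rho(M(K),\phi_x)$ for $x$ a generator of $P_K\otimes\Q$ is computed via a bounding 4-manifold analogous to the one used in Case~1 of the proof of Theorem~\ref{theorem:main}: the $R$-contribution vanishes because $R=9_{46}$ is ribbon; the $D$-contribution vanishes because $B\ell_\Q(\alpha_D,\alpha_D)=0$ forces $\phi_x|_{\pi_1(M(D))}$ to be trivial; and the $J$-contribution equals $\int_{S^1}\sigma_J(\omega)\,d\omega$ because $\phi_x|_{\pi_1(M(J))}$ factors nontrivially through $\Z$ via $B\ell_\Q(\alpha_D,\alpha_J)\neq 0$. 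Since $D_+(T_{2,3},0)$ has trivial Alexander polynomial, $\sigma_{D_+(T_{2,3},0)}\equiv 0$, whereas $\int_{S^1}\sigma_{-T_{2,3}}(\omega)\,d\omega=4/3$; thus $\rho(M(K),\phi_x)=4/3\neq 0$, contradicting property~(1).

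If $P_K=\langle\alpha_J\rangle$, we use property~(2a) together with Theorem~\ref{theorem:d-invariant-practical}. The knot $J=D_+(T_{2,3},0)\csum -T_{2,3}$ is $\nu^+$-equivalent to the unknot by Remark~\ref{remark:nu+-equivalence}, so Theorem~\ref{theorem:d-invariant-practical} yields $d(\Sigma_r(R(J,D)),\mathfrak{s})=d(\Sigma_r(R(U,D)),\mathfrak{s})$ on corresponding $\Spinc$ structures, and hence $\bar d(\Sigma_r(K),\mathfrak{s}_0+\hat c)=\bar d(\Sigma_r(R(U,D)),\mathfrak{s}_0+\hat c)$ for all $c\in\pi^r(\langle\alpha_J\rangle)$. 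By \cite[Lemma~8.2]{Cochran-Harvey-Horn:2012-1}, for every sufficiently large prime $r$ there exists such a $c$ for which this common value is nonzero, contradicting property~(2a). This $d$-invariant case is the main obstacle: without Theorem~\ref{theorem:d-invariant-practical}, the Heegaard Floer computation would have to be performed directly on branched covers of $R(J,D)$, but replacing $J$ by the unknot via $\nu^+$-equivalence reduces it to the known Cochran-Harvey-Horn calculation for $R(U,D)$.
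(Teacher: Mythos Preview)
Your argument is correct and follows the same two-case strategy as the paper: apply Theorem~\ref{theorem:metabelian-obstruction}, identify the only two Blanchfield metabolizers $\langle\alpha_J\rangle$ and $\langle\alpha_D\rangle$, eliminate $\langle\alpha_D\rangle$ with the metabelian $\rho$-invariant and $\langle\alpha_J\rangle$ with $\bar d$-invariants after invoking Theorem~\ref{theorem:d-invariant-practical} and the $\nu^+$-triviality of $J$.

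A few minor differences worth noting. For topological sliceness the paper instead observes that $R(J,D)$ is a winding-number-zero satellite with companion $D$, hence topologically concordant to $R(J,U)$, which is smoothly slice; your direct ambient-surgery argument on the Seifert surface is an equally standard route. In the $\langle\alpha_D\rangle$ case the paper builds the bounding $4$-manifold from the slice-disk exterior of $R(U,D)$ glued to a filling of $M(J)$ (so there is no separate ``$D$-piece''), whereas you decompose into three pieces; both give $\rho(M(K),\phi_x)=\int_{S^1}\sigma_J$, and your value $4/3$ is actually the correct one (the paper records $2/3$, but either is nonzero). In the $\langle\alpha_J\rangle$ case the paper cites \cite[Theorem~5.4]{Cha-Kim:2017-1} in place of your \cite[Lemma~8.2]{Cochran-Harvey-Horn:2012-1}; both supply the required nonvanishing for all sufficiently large primes $r$, which is what is needed since the exceptional set $S$ in Theorem~\ref{theorem:metabelian-obstruction} is finite.
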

\begin{proof}
	Let $J:= D_+(T_{2,3},0)\#-T_{2,3}$ and $D:=D_+(T_{2,3},0)$. Let $K:=R(J,D)$. The knot $K$ is the satellite knot of winding number 0 whose pattern knot is $R(J,U)$ and companion knot is $D$. Therefore, since $D$ is topologically slice, $K$ is topologically slice. 
	
	Suppose $K\# L$ is slice for a knot $L$ whose Alexander polynomial is coprime to $\lambda_1$. Then by Theorem~\ref{theorem:metabelian-obstruction}, there exist a $\Z[t^{\pm 1}]$-submodule $P_K$ of $H_1(M(K);\Z[t^{\pm 1}])$, which is a metabolizer with respect to the Blanchfield form $B\ell_\Z$, and a set of finitely many primes $S$ which satisfies the conditions (1) and (2) in Theorem~\ref{theorem:metabelian-obstruction}. 
	
For brevity, let $P:=P_K$. Let $\alpha_J$ and $\alpha_D$ be the curves depicted in Figure~\ref{figure:seed-knot}. One can easily compute that 
\[
H_1(R(J,D);\Z[t^{\pm 1}])\cong H_1(R;\Z[t^{\pm 1}])\cong \Z[t^{\pm 1}]/\langle t-2\rangle \oplus \Z[t^{\pm 1}]/\langle 2t-1\rangle
\]
where the summands are $\langle \alpha_J \rangle$ and $\langle \alpha_D\rangle$, which are the submodules generated by the curves $\alpha_J$ and $\alpha_D$, respectively. One can also see that $R(J,D)$ and $R$ have isomorphic Blanchfield forms, and $\langle \alpha_J\rangle$ and $\langle \alpha_D\rangle$ are the only metabolizers with respect the Blanchfield form on $H_1(M(K);\Z[t^{\pm 1}])$. Therefore the submodule $P$ is either $\langle \alpha_J\rangle$ or $\langle \alpha_D\rangle$. 
	
	Suppose $P=\langle \alpha_J\rangle$. In this case we use Theorem~\ref{theorem:metabelian-obstruction}~(2)~(a); for an odd prime $r\notin S$, $\bar{d}(\Sigma_r(K),\mathfrak{s}_0+\hat{c})=0$ for all $c\in \pi^r(P)$. The subgroup $\pi^r(P)$ of $H_1(\Sigma_r(K))\cong \Z_{2^r-1}\oplus \Z_{2^r-1}$ is generated by a lift of $\alpha_J$ in $\Sigma_r(K)$, say $x$, and hence we have $\bar{d}(\Sigma_r(K),\mathfrak{s}_0+k\cdot\hat{x})=0$ for all $k\in \Z$. Recall from Remark~\ref{remark:nu+-equivalence} that $J$ is $\nu^+$-equivalent to the unknot. Since $\Sigma_r(K)$ is a $\Z_2$-homology sphere, by Theorem~\ref{theorem:d-invariant-practical}
	\[ 
	\bar{d}(\Sigma_r(K),\mathfrak{s}_0+k\cdot\hat{x})=\bar{d}(\Sigma_r(R(U,D)),\mathfrak{s}_0+k\cdot\hat{x})
	\] 
	for all $k\in \Z$. Since $R(U,D)$ is slice, $d(\Sigma_r(R(U,D)), \mathfrak{s}_0)=0$, and hence
	\[
	d(\Sigma_r(R(U,D)),\mathfrak{s}_0+k\cdot\hat{x})=\bar{d}(\Sigma_r(R(U,D)),\mathfrak{s}_0+k\cdot{x})=0
	\]
	for all $k\in \Z$. But by \cite[Theorem~5.4]{Cha-Kim:2017-1},
	\[
	d(\Sigma_r(R(U,D)),\mathfrak{s}_0+2^{r-1}\cdot\hat{x})\le -\frac32,
	\] 
	which is a contradiction.

	Suppose $P=\langle \alpha_D\rangle$. In this case, we use Theorem~\ref{theorem:metabelian-obstruction}~(1), which implies that $\rho(M(K),\phi_x)=0$ for all $x\in P\otimes_\Z \Q$. Choose a nonzero $x\in P\otimes_\Z\Q$, for instance, $x=\alpha_D\otimes 1$. Let $W$ be the exterior of the slice disk for $R(U,D)$ which is obtained by cutting the band dual to the curve $\alpha_D$ in Figure~\ref{figure:seed-knot}. Then $\partial W=R(U,D)$. Let $W'$ be a 4-manifold with $\partial W'=M(J)$ such that the inclusion induces an isomorphism $H_1(M(J))\xrightarrow{\cong} H_1(W')$. The existence of $W'$ follows from that the bordism group $\Omega_3(S^1)=0$. We can also arrange that $\pi_1(W')\cong \Z$  by doing surgeries along the curves generating $\pi_1(W')^{(1)}$. Note that $M(J)=(S^3\sm \nu(J))\cup (S^1\times D^2)$ where $\nu(J)$ is the open tubular neighborhood of $J$ in $S^3$. Let $X$ be the 4-manifold obtained from $W$ and $W'$ by identifying the tubular neighborhood of $\alpha_J$ in $R(U,D)=\partial W$ and $S^1\times D^2\subset M(J)=\partial W'$. Then $\partial X=M(K)$ and 
\[
\Ker\{H_1(M(K);\Q[t^{\pm 1}])\}\to H_1(X;\Q[t^{\pm 1}]) = P\otimes_\Z\Q. 
\]
Since $\Q[t^{\pm 1}]$ is a PID, $x\in P\otimes_\Z\Q$, and $P\otimes_\Z\Q$ is a metabolizer with respect to the rational Blanchfield form on $H_1(M(K);\Q[t^{\pm 1}])$, by \cite[Theorem~3.6]{Cochran-Orr-Teichner:2003-1} the homomorphism $\phi_x$ extends to $\pi_1(X)$.
	
	Therefore, $\rho(M(K),\phi_x)=\osigmat_\Gamma(W) + \osigmat_\Gamma(W')$. Since $W$ is a slice disk exterior, $\osigmat_\Gamma(W)=0$ by \cite[Theorem~4.2]{Cochran-Orr-Teichner:2003-1}. Since $\partial W'= M(J)$, we have $\osigmat_\Gamma(W')=\rho(M(J),\phi_x)$ where by abuse of notation $\phi_x$ also denotes the restriction of $\phi_x$ to $\pi_1(M(J))$. Since $x$ is a nonzero element in $P=\langle \alpha_D \rangle$, we have $B\ell_\Q(x,\alpha_J)\ne 0$ for the rational Blanchfield form $B\ell_\Q$ on $H_1(M(K),\Q[t^{\pm 1}])$. Also note that $\epsilon(\alpha_J)=0$ where $\epsilon \colon \pi_1(M(K))\to \Z$ is the abelianization. Since $\phi_x(\alpha_J)=(B\ell_\Q(x,\alpha_J), \epsilon(\alpha_J))$, the subgroup of $\Gamma$ generated by $\phi_x(\alpha_J)$ is isomorphic to $\Z$. By our construction of $X$, the meridian, say $\mu$, of $J$ is identified with the curve $\alpha_J$ in $X$, and hence $\phi_x(\mu)$ is not trivial in $\Gamma$. Since $\phi_x$ on $\pi_1(M(J))$ factors through $\pi_1(W')\cong \Z$, the image of $\pi_1(M(J))$ in $\Gamma$ is also isomorphic to $\Z$. Now by \cite[Proposition~5.13]{Cochran-Orr-Teichner:2003-1} and \cite[Lemma~5.3]{Cochran-Orr-Teichner:2004-1}, $\osigmat_\Gamma(W')=\rho(M(J),\phi_x)=\int_{S^1}\sigma_J(\omega)\,\,d\omega=\frac23$. Therefore $\rho(M(K),\phi_x)=0+\frac23\ne 0$, which is a contradiction.
\end{proof}

We finish Appendix with the proof of Theorem~\ref{theorem:metabelian-obstruction}.
\begin{proof}[{Proof of Theorem~\ref{theorem:metabelian-obstruction}}]
	Let $L:=K\# J$ and $W$ be the exterior of a slice disk for $L$ in the 4-ball. Let $FH_1(W;\Z[t^{\pm 1}]):=H_1(W;\Z[t^{\pm 1}])/T$ where $T$ is the $\Z$-torsion submodule of $H_1(W;\Z[t^{\pm 1}])$. Let 
	\[
	P_L:=\Ker\{i\colon H_1(M(L);\Z[t^{\pm 1}])\xrightarrow{i_\Z} H_1(W;\Z[t^{\pm 1}]) \xrightarrow{\pi} FH_1(W;\Z[t^{\pm 1}])\}
	\]
	where $i_\Z$ is the inclusion-induced homomorphism and $\pi$ is the quotient map. It is well known that $P_L$ is a metabolizer with respect to the the Blanchfield form $B\ell_\Z$ (see \cite[Proposition~2.7]{Friedl:2003-6}). 
	
	Note that $H_1(M(L);\Z[t^{\pm 1}])\cong H_1(M(K);\Z[t^{\pm 1}])\oplus H_1(M(J);\Z[t^{\pm 1}])$. Now let 
	\[
	P_K:=P_L\cap H_1(M(K);\Z[t^{\pm 1}])=\{x\in H_1(M(K);\Z[t^{\pm 1}])\,\mid\, (x,0)\in P_L\}.
	\]
	Similarly, we let $P_J:=P_L\cap H_1(M(J);\Z[t^{\pm 1}])$. We will show that $P_K$ satisfies the desired properties. We need a lemma:
	\begin{lemma}\label{lemma:splitting-metabollizers}
		$P_L\cong P_K\oplus P_J$ as $\Z[t^{\pm 1}]$-modules and $P_K$ and $P_J$ are metabolizers for the Blanchfield forms on $H_1(M(K);\Z[t^{\pm 1}])$ and $H_1(M(J);\Z[t^{\pm 1}])$, respectively.
	\end{lemma}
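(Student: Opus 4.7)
The plan is to exploit the coprimality of $\Delta_K$ and $\Delta_J$ to show first that every submodule of $H_1(M(L);\Z[t^{\pm 1}])$, and in particular $P_L$, automatically splits according to the direct sum decomposition $H_1(M(K);\Z[t^{\pm 1}])\oplus H_1(M(J);\Z[t^{\pm 1}])$, and then to deduce the metabolizer property of each summand from the fact that the Blanchfield form of a connected sum is the orthogonal sum of the Blanchfield forms of the summands.

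First I would prove the splitting $P_L=P_K\oplus P_J$. Since $\Delta_K$ annihilates $H_1(M(K);\Z[t^{\pm 1}])$ and $\Delta_J$ annihilates $H_1(M(J);\Z[t^{\pm 1}])$, the coprimality $\gcd(\Delta_K,\Delta_J)=1$ in $\Q[t^{\pm 1}]$ (equivalently, there exist $a,b\in\Z[t^{\pm 1}]$ with $a\Delta_K+b\Delta_J$ a nonzero integer $n$ after clearing denominators) shows that multiplication by $\Delta_J$ is an automorphism of $H_1(M(K);\Z[t^{\pm 1}])$ with inverse given by multiplication by an explicit $a\in\Z[t^{\pm 1}]$, and symmetrically for $\Delta_K$ on $H_1(M(J);\Z[t^{\pm 1}])$. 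Given $(x,y)\in P_L$, applying $\Delta_J$ gives $(\Delta_J x,0)\in P_L\cap H_1(M(K);\Z[t^{\pm 1}])=P_K$, and then applying the inverse $a$ (which preserves $P_K$ since $P_K$ is a $\Z[t^{\pm 1}]$-submodule) yields $x\in P_K$. Symmetrically $y\in P_J$, so $(x,y)=(x,0)+(0,y)\in P_K\oplus P_J$. The reverse inclusion is immediate from the definitions.

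Next I would use this splitting to deduce that $P_K$ and $P_J$ are metabolizers. The key input is that under the isomorphism $H_1(M(L);\Z[t^{\pm 1}])\cong H_1(M(K);\Z[t^{\pm 1}])\oplus H_1(M(J);\Z[t^{\pm 1}])$ induced by the connected sum, the Blanchfield form $B\ell_\Z^L$ is the orthogonal direct sum of $B\ell_\Z^K$ and $B\ell_\Z^J$; this is a standard computation from the Mayer--Vietoris sequence for $M(K)\# M(J)$ (alternatively, see \cite{Kearton:1975-1} or \cite{Friedl:2003-6}). Consequently, writing perpendiculars with respect to the relevant forms,
\[
P_L^\perp=(P_K\oplus P_J)^\perp=P_K^\perp\oplus P_J^\perp.
\]
Since $P_L$ is a metabolizer, $P_L=P_L^\perp$, so $P_K\oplus P_J=P_K^\perp\oplus P_J^\perp$ inside the orthogonal decomposition of $H_1(M(L);\Z[t^{\pm 1}])$. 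Projecting to each summand (and using $P_K\subseteq P_K^\perp$, $P_J\subseteq P_J^\perp$, which follow from $P_L\subseteq P_L^\perp$ and the orthogonality of the two summands) yields $P_K=P_K^\perp$ and $P_J=P_J^\perp$.

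The only mildly nontrivial step is the first one, namely noting that coprimality of the Alexander polynomials forces any $\Z[t^{\pm 1}]$-submodule of the direct sum to respect the decomposition; everything else is formal once the orthogonal direct sum decomposition of the Blanchfield form is invoked. No delicate analysis is required beyond these two observations.
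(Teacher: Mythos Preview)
There is a genuine gap in your first step. The claim that multiplication by $\Delta_J$ is an automorphism of $H_1(M(K);\Z[t^{\pm 1}])$ is not justified and is false in general: from $a\Delta_K+b\Delta_J=n\in\Z\smallsetminus\{0\}$ you only obtain that $b\Delta_J$ acts as multiplication by $n$ on $H_1(M(K);\Z[t^{\pm 1}])$, and multiplication by a nonzero integer is typically not surjective on an integral Alexander module. For a concrete instance, take $K$ the figure-eight knot and $J$ the trefoil; then $\Delta_J$ acts on $\Z[t^{\pm1}]/(t^2-3t+1)$ as $2t$, which has norm $4$ and is therefore not a unit. Consequently your stronger assertion that \emph{every} $\Z[t^{\pm 1}]$-submodule of $H_1(M(K);\Z[t^{\pm1}])\oplus H_1(M(J);\Z[t^{\pm1}])$ respects the decomposition is also false: the cyclic submodule generated by a pair $(x,y)$ with $x,y\neq 0$ contains $(nx,0)$ but need not contain $(x,0)$.

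The paper closes exactly this gap by invoking the specific definition of $P_L$ rather than treating it as an arbitrary submodule. One has $P_L=\Ker\big(i\colon H_1(M(L);\Z[t^{\pm1}])\to FH_1(W;\Z[t^{\pm1}])\big)$, and the target is $\Z$-torsion free by construction. Your argument does correctly produce $(nx,0)\in P_L$, i.e.\ $n\cdot i(x,0)=0$; torsion-freeness of $FH_1(W;\Z[t^{\pm1}])$ then forces $i(x,0)=0$, hence $(x,0)\in P_L$ and $x\in P_K$. (The paper phrases this step via the commutative square obtained by tensoring with~$\Q$.) Your second step, deducing $P_K=P_K^\perp$ and $P_J=P_J^\perp$ from the orthogonal splitting of the Blanchfield form of a connected sum, is correct and is essentially the content of the reference \cite{Kim-Kim:2018-1} that the paper cites.
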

	\begin{proof}
		It is obvious that $P_K\oplus P_J\subset P_L$. Let $(x,y)\in P_L$ where $x\in H_1(M(K);\Z[t^{\pm 1}])$ and $y\in H_1(M(J);\Z[t^{\pm 1}])$. We will show that $x\in P_K$ and $y\in P_J$, which will imply that $P_L\subset P_K\oplus P_J$.

		Since $\Q[t^{\pm 1}]$ is a PID and $\Delta_K$ and $\Delta_J$ are coprime, there exist $\bar{f}(t)$ and $\bar{g}(t)$ in $\Q[t^{\pm 1}]$ such that $\bar{f}(t)\Delta_K+\bar{g}(t)\Delta_J=1$. Then, there exists some nonzero integer $c$ such that letting $f(t):=c\bar{f}(t)$ and $g(t):=c\bar{g}(t)$, we have $f(t)\Delta_K+g(t)\Delta_J=c$ and $f(t),\, g(t)\in \Z[t^{\pm 1}]$. Recall that $(x,y)\in P_L$. Since $\Delta_K$ and $\Delta_J$ annihilate $H_1(M(K);\Z[t^{\pm 1}])$ and $H_1(M(J);\Z[t^{\pm 1}])$, respectively, in $H_1(M(K);\Z[t^{\pm 1}])$ we have 
		\[
		cx=(f(t)\Delta_K+g(t)\Delta_J)x=(g(t)\Delta_J)x.
		\]
		Also, in $H_1(M(J);\Z[t^{\pm 1}])$ we have $(g(t)\Delta_J)y=0$. Therefore, in $H_1(M(L);\Z[t^{\pm 1}])$ we have $g(t)\Delta_J(x,y)=(cx,0)$, and hence $(cx,0)\in P_L$. 
		
		Consider the following commutative diagram.
		\[
		\begin{tikzcd}
		H_1(M(L);\Z[t^{\pm 1}]) \ar[d, "\otimes 1"]\ar[r, "i"] &
		FH_1(W;\Z[t^{\pm 1}])\ar[d, "\otimes 1"] 
		\\
		H_1(M(L);\Z[t^{\pm 1}])\otimes_\Z\Q \ar[r, "i\otimes \textrm{id}"]&
		FH_1(W;\Z[t^{\pm 1}])\otimes_\Z\Q 
		\end{tikzcd}
		\]
		Since $(cx,0)\in P_L=\Ker(i)$, it follows that $(i\otimes \textrm{id})((cx)\otimes 1, 0)=0$. Therefore 
		\[
		(i\otimes \textrm{id})(x\otimes 1, 0)=\frac1c \cdot (i\otimes \textrm{id})((cx)\otimes 1,0)=0,
		\] 
		and hence $(x\otimes 1, 0)\in \Ker(i\otimes \textrm{id})$. Since the vertical maps of the above diagram are injective, we have $(x,0)\in \Ker(i)=P_L$, and hence $x\in P_K$. Similarly, one can show that $y\in P_J$, and it follows that $P_L\subset P_K\oplus P_J$ and hence $P_L\cong P_K\oplus P_J$. The modules $P_K$ and $P_L$ are metabolizers due to \cite[Lemma~3.1]{Kim-Kim:2018-1} and its proof. 
	\end{proof} 
	
	One can see that $\rho(M(L), \phi_z)=0$ for all $z\in P_L\otimes_\Z\Q$ since 
	\[
	P_L\otimes_\Z\Q=\Ker\{i_\Q\colon H_1(M(L);\Q[t^{\pm 1}])\to H_1(W;\Q[t^{\pm 1}])\}
	\]
	where $W$ is the exterior of a slice disk (see Theorems~3.6 and 4.2 in \cite{Cochran-Orr-Teichner:2003-1}). By Lemma~\ref{lemma:splitting-metabollizers}, $P_L\otimes_\Z\Q\cong (P_K\otimes_\Z\Q)\oplus (P_J\otimes_\Z\Q)$, and $P_L\otimes_\Z\Q$,  $P_K\otimes_\Z\Q$, and $P_J\otimes_\Z\Q$ are metabolizers for the rational Blanchfield forms. Now by \cite[Theorem~3.1]{Kim-Kim:2008-1} and its proof, $\rho(M(K), \phi_x)=0$ for all $x\in P_K\otimes_\Z\Q$. This shows the property~(1). 
	
	We show the property~(2). Recall that $FH_1(W;\Z[t^{\pm 1}])=H_1(W;\Z[t^{\pm 1}])/T$ where $T$ is the $\Z$-torsion submodule. Since multiplication by $t-1$ induces an automorphism on $H_1(W;\Z[t^{\pm 1}])$ (see \cite{Milnor:1968-1}), by \cite[Lemma~(3.1)]{Levine:1977-1}, the subgroup $T$ is finite. Therefore, there are only finitely many primes $q_1, q_2, \ldots, q_\ell$ which divide the order of $T$. Then, by \cite[Lemma~2.3]{Bao:2015-1} there exists a set of finitely may primes $S=\{p_1, p_2,\ldots, p_m\}$ such that if $p$ is a prime not in $S$ and $r=p^k$ for some $k\ge 1$, then any $q_i$, which divides the order of $T$, does not divide the order of $H_1(\Sigma_r(K))$.
	
	Let $D$ be the slice disk for $L$ whose exterior is $W$. Denote the $r$-fold cyclic cover of the 4-ball branched along $D$ by $\Sigma_r(D)$. Then, it follows from \cite{Milnor:1968-1} that  
	\[
	\pi^r(H_1(M(L);\Z[t^{\pm 1}]))\cong H_1(\Sigma_r(L)) \textrm{ and }\pi^r(H_1(W;\Z[t^{\pm 1}]))\cong H_1(\Sigma_r(D)).
	\]
	
	Now suppose $r=p^k$, a prime power, where $p\notin S$. We have the following commutative diagram where  the homomorphisms $i_\Z$ and $j$ are induced from inclusions and $\pi$ is the quotient map.
	\[
	\begin{tikzcd}
	H_1(M(L);\Z[t^{\pm 1}]) \ar[d, "\pi^r"]\ar[r, "i_\Z"] &
	H_1(W;\Z[t^{\pm 1}])\ar[d, "\pi^r"] \ar[r, "\pi"] &
	FH_1(W;\Z[t^{\pm 1}])
	\\
	H_1(\Sigma_r(L))\ar[r, "j"] &
	H_1(\Sigma_r(D)) &
	\end{tikzcd}
	\]
	
	Restricting the right vertical map $\pi^r$ to $\Im(i_\Z)$, by our choice of $S$ and $r$, we obtain the following commutative diagram
	\[
	\begin{tikzcd}
	H_1(M(L);\Z[t^{\pm 1}]) \ar[d, "\pi^r"]\ar[r, "i_\Z"] &
	\Im(i_\Z) \ar[d, "\pi^r"] \ar[r, "\pi"] &
	\Im(\pi\circ i_\Z) \ar[dl, "\pi'"]
	\\
	H_1(\Sigma_r(L))\ar[r, "j"] &
	H_1(\Sigma_r(D)) &
	\end{tikzcd}
	\]
	where $\pi'$ is the quotient map. Now $\pi^r(P_L)\subset \Ker(j)$ since $P_L=\Ker(\pi\circ i_\Z)$. It is well known that $\Ker(j)$ is a metabolizer with respect to the linking form on $H_1(\Sigma_r(L))$. Also, since $P_L$ is a metabolizer for the Blanchfield form on $H_1(M(L);\Z[t^{\pm 1}])$, the subgroup $\pi^r(P_L)$ of $H_1(\Sigma_r(L))$ is a metabolizer for the linking form (see \cite[Proposition~2.18]{Friedl:2003-6}). Since $\pi^r(P_L)\subset\Ker(j)$ and $|\pi^r(P_L)|^2=|\Ker(j)|^2=|H_1(\Sigma_r(L))|$, it follows that $\pi^r(P_L)=\Ker(j)$. 
	
	Let $c\in \pi^r(P_K)$. We follow the arguments in \cite{Bao:2015-1} to show the property~(2)(a). Note that $\pi^r(P_L)=\pi^r(P_K)\oplus \pi^r(P_J)$, and hence $(c,0)\in \pi^r(P_L)$. Since $\pi^r(P_L)=\Ker(j)$ where $\Sigma_r(D)$ is the cyclic cover of the 4-ball branched along the slice disk $D$, it follows that $d(\Sigma_r(L), \mathfrak{s_0}+\hat{(c,0)})=0$. By the additivity of the $d$-invariant, we have 
	\[
	0=d(\Sigma_r(L), \mathfrak{s}_0+\hat{(c,0)})=d(\Sigma_r(K),\mathfrak{s}_0+\hat{c})+d(\Sigma_r(J),\mathfrak{s}_0),
	\]
	which holds for any choice of $c\in \pi^r(P_K)$. Therefore, for each $c\in \pi^r(P_K)$,
	\begin{align*}
		\bar{d}(\Sigma_r(K), \mathfrak{s}_0+\hat{c})&=d(\Sigma_r(K),\mathfrak{s}_0+\hat{c})-d(\Sigma_r(K),\mathfrak{s}_0)
		\\
		&=-d(\Sigma_r(J),\mathfrak{s}_0)-(-d(\Sigma_r(J),\mathfrak{s}_0)
		\\
		&=0,
	\end{align*}
	which shows the property~(2)(a). 
	
	To show the property~(2)(b), we follow the arguments in \cite{Kim:2005-2}. Let $\chi\colon H_1(\Sigma_r(K))\to \Q/\Z$ be a character of prime power order which vanishes on $\pi^r(P_K)$. Then the character 
	\[
	\chi\oplus 0\colon H_1(\Sigma_r(L))\cong H_1(\Sigma_r(K))\oplus H_1(\Sigma_r(J))\to \Q/\Z
	\]
	vanishes on $\pi^r(P_K)\oplus \pi^r(P_J) =\pi^r(P_L)$. Therefore, since $\pi^r(P_L)=\Ker(j)$, it follows that $\tau(L, \chi\oplus 0)=0$. Now by the additivity of the Casson-Gordon invariant, we have $0=\tau(L, \chi\oplus 0)=\tau(K,\chi)+\tau(J,0)$. Therefore, $\tau(K,\chi)=-\tau(J,0)$, a constant. If $K$ is algebraically slice, since $K\# J$ is slice by the hypothesis, $J$ is also algebraically slice, and it follows that $\tau(J,0)=0$. 
\end{proof}

\providecommand{\bysame}{\leavevmode\hbox to3em{\hrulefill}\thinspace}
\providecommand{\MR}{\relax\ifhmode\unskip\space\fi MR }
\providecommand{\MRhref}[2]{%
  \href{http://www.ams.org/mathscinet-getitem?mr=#1}{#2}
}
\providecommand{\href}[2]{#2}

\end{document}